\newtheoremstyle{newstyle} 
    {\topsep}                    
    {\topsep}                    
    {\slshape}                   
    {}                           
    {\bfseries}                   
    {.}                          
    {5pt plus 1pt minus 1pt}                       
    {}  
\theoremstyle{newstyle}
\newtheorem{tvrz}{Proposition}[section]
\newtheorem{veta}[tvrz]{Theorem}
\newtheorem{lemmz}[tvrz]{Lemma}
\newtheorem{cor}[tvrz]{Corollary}
\theoremstyle{definition}
\newtheorem{defi}[tvrz]{Definition}
\newtheorem{pozn}[tvrz]{Remark}
\newtheorem{priklad}[tvrz]{Example}
\newtheorem*{defi*}{Definition}
\def\U{ \mathcal{U}}
\def\A{ \mathcal{A}}
\def\K{ \mathcal{K}}
\def\B{ \mathcal{B}}
\def\O{ \mathcal{O}}
\def\N{ \mathbb{N}}
\def\R{ \mathbb{R}}
\def\Rn{ \mathbb{R}^n}
\def\ddd{ \,\mathrm{d}}
\def\cara{\, ; \;}
\def\D{\mathrm{D}}
\DeclareMathSymbol{\varnothing}     {\mathord}{AMSb}{"3F}
\DeclareMathSymbol{\varkappa}       {\mathord}{AMSb}{"7B}
\DeclareMathSymbol{\smallsetminus}  {\mathbin}{AMSb}{"72}
\begin{document}


\title{Generalized Ordinary Differential Equations in Metric Spaces}
\author{\textbf{Břetislav Skovajsa}\thanks{The author was supported by the ERC CZ grant LL1203 of the Czech Ministry of Education.} \\ 
Charles University, Faculty of Mathematics and Physics \\
Department of Mathematical Analysis\\
Sokolovská 83, 186 00 Prague 8, Czech Republic\\
E-mail: skovajsa@karlin.mff.cuni.cz}

\date{}

\maketitle

\begin{abstract}
The aim of this text is to extend the theory of generalized ordinary differential equations to the setting of metric spaces. We present existence and uniqueness theorems that significantly improve previous results even when restricted back to Euclidean spaces. \\[6pt]
\textbf{Keywords:} Generalized ordinary differential equations, curves, Henstock-Kurzweil integral, metric spaces. \\[6pt]
\textbf{2010 MSC:} 34G20, 28A15, 34K45, 34N05.
\end{abstract}

\section*{Introduction}

The theory of generalized ordinary differential equations was established in 1957 by J.~Kurz\-weil \cite{kurzweil57}. It is one of the most comprehensive overarching theories in terms of admissible data and structure of the equation, which makes it particularly useful for dealing with phenomena that lead to low regularity or even discontinuity of solutions such as impulses or high oscillation. However, this theory was so far only considered on linear spaces and expanding it to metric spaces will considerably increase the breadth of problems available for study in this setting.  \par
The reason behind the versatility of Kurzweil's theory can be traced back to its two main benefits.
\begin{enumerate}[label=(\roman*), itemsep=2pt]
\item
First, the problem is formulated in a very general manner. A generalized ordinary differential equation (GODE for short) is given by a function $F \colon \Rn \times [a,b] \times [a,b] \rightarrow \Rn$ and solving it can be roughly understood as looking for a function $u \colon [a,b] \rightarrow \Rn$ satisfying
\begin{equation} \label{godesim1}
u(t) \; \sim \; u(\tau) + F(u(\tau),\tau, t) - F(u(\tau), \tau , \tau) \quad \text{for} \quad t \to \tau.
\end{equation}
In geometrical terms, every point of $\Rn \times [a,b]$ is associated with different and possibly nonlinear infinitesimal behaviour, given by a ``tangent curve" i.e.\ the right hand side of \eqref{godesim1} as a function of $t$. In this context, the standard ordinary differential equation  
\begin{equation} \label{careq}
\dot{x} = f(x,t)
\end{equation}
corresponds to the special case $F(x, \tau, t) = f(x, \tau) \, t$.
\item
Secondly, we are dealing with a more general concept of solution, as the relation between the solution and the right hand side is understood in terms of the Henstock-Kurzweil integral. This puts very little qualitative restraint on the data of the equation. Furthermore, it allows this concept to encompass both classical solutions and solutions in terms of measure due to the ability of nonabsolutely convergent integrals to both include the Lebesgue integral and integrate all derivatives.
\end{enumerate}
While early contributions by J.\ Kurzweil, J.\ Jarník and I.\ Vrkoč mainly focused on applications to continuous dependence of ODEs on a parameter \cite{jarnik1,jarnik2,jarnik3,vrkoc}, the methods of GODEs were soon after extended to Banach spaces, where they could be applied to certain types of PDEs as well \cite{kurzpde1,kurzpde2,kurzpde3}. For a comprehensive summary of subsequent development, we point to key monographs \cite{schwabik92} and \cite{STV} by Š.\ Schwabik, M.\ Tvrdý and O.\ Vejvoda. The theory of GODEs remains relevant to this day, as evidenced by more recent books \cite{tvrdybook,kurzweil11} and numerous articles by M.\ Tvrdý, A.\ Slavík, G.\ A.\ Monteiro, M.\ Federson and many others. Currently, it is known that GODEs include an extremely wide range of problems, such as differential equations with impulses \cite{slavik1}, dynamic equations on time scales \cite{slavik2}, Fredholm-Stieltjes and Volterra-Stieltjes equations \cite{tvrdybook,federson1,federson2} and many types of functional differential equations \cite{vorel1,vorel2,federson,slavik4}. There are even cases where a single result concerning GODEs managed to encompass several theorems for seemingly unrelated types of equations \cite{slavik5}. \par
Recently, a growing amount of attention is called to analysis on metric spaces and many theories clasically associated with linear structure are being transferred to this more general setting \cite{MS1,MS2,MS3,MS4,MS5,MS6}. This has the effect that various special structures (like manifolds or spaces with sub-Riemannian geometry) are handled all at once. Theories motivated by ODEs in particular were developed e.g.\ by A.\ I.\ Panasyuk \cite{panasyuk1,panasyuk2,panasyuk3}, J.\ P.\ Aubin \cite{aubin92,aubin93,aubin99}, T.\ Lorenz \cite{lorentz1,lorentz2} and J.\ Tabor \cite{tabor01}. While terminology differs widely and ranges from differential inclusions to constructions resembling tangent spaces of manifolds, they all share the same common idea. In the absence of lines, they also resort to arbitrary tangent curves. However, this involves an additional layer of technical difficulty. Any curve that approximates the behaviour of the solution around a specific point needs to pass through that point. This is always true in \eqref{godesim1}, as the tangent curve is generated by addition. However, in metric spaces, this needs to be explicitly required. As such, they are concerned with the tangent behaviour 
\begin{equation} \label{metsim}
u(\tau + s) \; \sim \; Q(u(\tau),\tau, s) \quad \text{for} \quad s \to 0,
\end{equation}
where $(X, \mathbf{q})$ is a metric space, $u \colon \R \rightarrow X$ and the mapping $Q \colon X \times \R \times [- 1, 1] \rightarrow X$ satisfies the condition
\begin{equation} \label{normalizemp}
Q(x, \tau, 0) = x.
\end{equation}
With the help of \eqref{normalizemp}, the relation in \eqref{metsim} can be defined by the pointwise limit
\begin{equation} \label{metriclim}
\lim_{s \to 0} \frac{ \mathbf{q} \Big( u(\tau + s), Q(u(\tau), \tau, s)  \Big) }{|s|} = 0.
\end{equation}
By requiring \eqref{metriclim} to hold everywhere or almost everywhere it is possible to obtain solutions in the classical or Carath\'{e}odory sense respectively. We can see that the concept of tangent curves appears in both respective theories with quite different motivation, which Panasyuk already noticed \cite{panasyuk1}. However, in terms of quality of solution, none of the previous studies on metric spaces reached the generality of Kurzweil's theory. \par
Our goal is to merge these two generalizations of differential equations into a single theory. First, we notice that every GODE on a linear space can be easily transformed so that it satisfies $F(x, \tau, \tau) = x$. The problem \eqref{godesim1} then transforms into
\begin{equation*} 
u(t) \; \sim \; F(u(\tau),\tau, t) \quad \text{for} \quad t \to \tau,
\end{equation*}
which can be posed in a metric space. We then demonstrate that despite the inability to integrate over the target space, we can express this problem as an integral equation in one dimension involving the distance of correctly chosen elements. We even obtain a pointwise representation similar to \eqref{metriclim} with the help of the MC integral by J.~Mal\'{y} and H.~Bendov\'{a} \cite{malybendova}. Contrary to expectation, this results in a surprisingly elegant formulation that avoids unnecessary technicalities. \par
As our main achievement, we present uniqueness and existence theorems which not only replicate known results of the GODE theory in a new setting, but also offer considerable improvements, even when restricted to Euclidean spaces. The most notable improvement concerns the quality of solutions. Standard conditions for uniqueness and existence require the tangent curves to be functions of bounded variation, which directly causes the solutions to be BV functions as well. In contrast, we work with relaxed conditions that only require the tangent curves to be regulated, which presents another added layer of difficulty. The qualitative jump from BV functions to regulated functions remains a long standing goal in many areas of GODEs and Henstock-Kurzweil integration. As such, our results are of significant interest even without the context of metric spaces.  \par
For a more practically oriented motivation of dealing with regulated functions, we can mention the theory of measure differential equations \cite{MDE1}. It was developed as a method for dealing with impulsive perturbations of \eqref{careq} and considers the problem 
\begin{equation} \label{mdeq}
\D x = f(x, t) + h(x,t) \, \D g,
\end{equation}
where $\D x$ and $\D g$ are understood as distributional derivatives of the functions $x$ and $g$ respectively. In order to express \eqref{mdeq} with the help of the Lebesgue-Stieltjes integral, $g$ is required to be a function of bounded variation \cite{MDE2}. However, the Stieltjes version of the Henstock-Kurzweil integral does not share this limitation. Thus, the GODE given by 
\begin{equation*}
F(x, \tau, t) = f(x, \tau)\, t + h(x, \tau) \, g(t),
\end{equation*} 
allows us to examine a much wider range of perturbations.\par
The first section deals with preliminaries. The second section recalls the basics of the MC integral and slightly modifies it to make it more compatible with the GODE theory. The third section recalls the basics of generalized ordinary differential equations and presents the standard theorems that we will aim to improve. The fourth section deals with the technicalities of expanding the definition of GODEs to metric spaces. The fifth and sixth chapter deal with uniqueness and existence of solutions respectively and contain our two main results: Theorem \ref{mainunique} and Theorem \ref{mainexist}. The last chapter shows how our theorems look when restricted to linear spaces and how they compare to previous results.

\section{Preliminaries}
For $x \in \R$ we say that $x$ is positive if $x > 0$ and nonnegative if $x \geq 0$. The symbol $\R^+$ then stands for the set of positive real numbers and $\R^+_0$ for the set of nonnegative real numbers. By $\U(x, r)$ and $\B(x, r)$ we denote the open and closed ball with centre $x$ and radius $r$. A function $f \colon \R \rightarrow \R$ is increasing if $f(s) < f(t)$ for $s < t$ and nondecreasing if $f(s) \leq f(t)$ for $s < t$. For a real function $h \colon \R \rightarrow \R$ we will use $h(x+)$ to denote the limit of $h$ at the point $x \in \R$ from the right, if it exists. By $\D^+ f(\tau)$ we denote the upper right derivative of the function $f$ at the point $\tau \in \R$ i.e.\
\begin{equation*}
\D^+ f(\tau) = \limsup_{t \to \tau_+} \frac{f(t) - f(\tau)}{t - \tau}.
\end{equation*}
We say that the function $f \colon \Rn \times [a,b] \rightarrow \Rn$ satisfies the standard Carath\'{e}odory assumptions if 
\begin{enumerate}[label=$\mathbf{(C \arabic*)}$, noitemsep]
\item the function $x \mapsto f(x, t)$ is continuous for almost all $t \in [a,b]$,
\item the function $t \mapsto f(x, t)$ is measurable for all $x \in \Rn$,
\item there exists a Lebesgue integrable function $m \colon [a,b] \rightarrow \R^+_0$ such that 
\begin{equation*}
\|f(x, t)\| \leq m(t) \quad \text{for all} \;  x \in \Rn \; \text{and almost all} \; t \in [a,b].
\end{equation*}
\end{enumerate}
We recall that under these assumptions for every $x_0 \in \Rn$ and every $t_0 \in [a,b]$ there exists $\Delta > 0$ and a function $x \colon [a,b] \cap [t_0 - \Delta, t_0 + \Delta] \rightarrow \Rn$ such that
\begin{equation*}
x(t) = x_0 + \mathrm{(L)} \int^t_{t_0} f(x(s), s) \ddd s \quad \text{for} \quad  t \in  [a,b] \cap [t_0 - \Delta, t_0 + \Delta].
\end{equation*} 
The term partition of $[a,b] \subset \R$ will stand for any collection of closed intervals and tags $\{[t_{i-1}, t_i], \tau_i\}_{i=1}^k$ such that $t_0 = a$, $t_k = b$ and $\tau_i \in [t_{i-1}, t_i]$. 

\begin{defi}
Let $\delta \colon [a,b] \rightarrow \R^+$ be a positive real function defined on $[a,b]$. A partition of $[a,b]$ is called $\delta$-fine if for each $i = 1, \ldots , k$ it satisfies 
\begin{equation} \label{deltafine}
[t_{i-1}, t_i] \subset (\tau_i - \delta(\tau_i), \tau_i + \delta(\tau_i)).
\end{equation}
\end{defi}

\begin{lemmz}[Cousin] \label{cousin}
For every $\delta \colon [a,b] \rightarrow \R^+$ the set of all $\delta$-fine partitions of $[a,b]$ is nonempty.
\end{lemmz}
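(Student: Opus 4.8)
The plan is to argue by contradiction via repeated bisection. Suppose that $[a,b]$ admits no $\delta$-fine partition. Let $c = (a+b)/2$ and split $[a,b]$ into $[a,c]$ and $[c,b]$. If each of these two subintervals admitted a $\delta$-fine partition, then their concatenation would be a $\delta$-fine partition of $[a,b]$; hence at least one half, which I call $[a_1, b_1]$, admits no $\delta$-fine partition. Iterating this step produces a nested sequence of closed intervals $[a,b] = [a_0, b_0] \supset [a_1, b_1] \supset [a_2, b_2] \supset \cdots$, none of which admits a $\delta$-fine partition, with $b_n - a_n = (b-a)/2^n \to 0$.

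By the nested interval property (i.e.\ completeness of $\R$) there is a point $\tau \in \bigcap_{n \geq 0} [a_n, b_n] \subset [a,b]$. Since $\delta(\tau) > 0$ and $\tau \in [a_n, b_n]$ for every $n$ while $b_n - a_n \to 0$, for all sufficiently large $n$ we get $[a_n, b_n] \subset (\tau - \delta(\tau), \tau + \delta(\tau))$. Then the one-interval collection $\{ [a_n, b_n], \tau \}$ is itself a $\delta$-fine partition of $[a_n, b_n]$, contradicting the choice of that interval. This contradiction completes the proof.

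The only point needing care — and it is routine — is the concatenation step in the first paragraph: one must check that gluing a $\delta$-fine partition of $[a,c]$ to a $\delta$-fine partition of $[c,b]$ genuinely yields a partition of $[a,b]$ in the sense defined above, i.e.\ the endpoints agree at $c$, each tag still lies in its interval, and condition \eqref{deltafine} is inherited interval by interval. An alternative proof avoids bisection: put $E = \{ x \in (a,b] : [a,x] \text{ has a } \delta\text{-fine partition} \}$, note $E \neq \emptyset$ since every $x$ with $a < x < a + \delta(a)$ lies in it, let $s = \sup E$, and use $\delta(s) > 0$ together with the same gluing observation to show first $s \in E$ and then $s = b$. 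I would present the bisection version, since it isolates the two essential ingredients — completeness of $\R$ and positivity of $\delta$ — most cleanly.
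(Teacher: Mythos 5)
Your proof is correct. Note that the paper does not prove this lemma at all --- it simply cites Kurzweil (Lemma 1.1.1) and Henstock (Theorem 3.1) --- so you are supplying an argument where the text supplies a reference. The bisection argument you give is the standard one and is complete: the concatenation step works exactly as you say (the two subpartitions share the endpoint $c$, tags and the inclusion \eqref{deltafine} are inherited interval by interval), the nested-interval point $\tau$ lies in every $[a_n,b_n]$, and for $n$ large enough the single tagged interval $\{[a_n,b_n],\tau\}$ is itself $\delta$-fine, giving the contradiction. The alternative supremum argument you sketch is equally standard and also goes through (with the observation that $s\in E$ because one can append the tagged interval $\{[x,s],s\}$ for any $x\in E$ with $s-\delta(s)<x\leq s$). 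Either version would serve as a valid replacement for the citation.
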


For proof see \cite{kurzweil57} (Lemma 1.1.1) or \cite{henstocklectures} (Theorem 3.1). \\[6pt]
In the entire sequel, $E$ will denote a normed linear space and $I \subset \R$ will denote an arbitrary interval (we do not limit ourselves to closed intervals as domains for the indefinite integral).

\begin{defi}[J. Kurzweil]

A function $u \colon I \rightarrow E$ is called an indefinite SHK integral of $U \colon I \times I \rightarrow E$ if for every $\varepsilon > 0$ and every $[a , b ] \subset I$ there exists $\delta \colon [a , b ] \rightarrow \R^+$ such that for every $\delta$-fine partition $\{[t_{i-1}, t_i], \tau_i\}_{i=1}^k$ of $[a , b ]$ we have
\begin{equation} \label{skhdef}
\sum_{i = 1}^k \|u(t_i) - u(t_{i-1}) - U(\tau_i, t_i) + U(\tau_i, t_{i-1}) \| < \varepsilon. 
\end{equation}
The definite SHK integral of $U$ over $[\alpha , \beta ] \subset I$ is defined as 
\begin{equation*}
(\text{SHK})\int_{ \alpha }^{ \beta } \D_t \, U(\tau, t) = u( \beta ) - u( \alpha ).
\end{equation*}  
\end{defi}

\begin{pozn}
$\mathbf{a)}$ The function $\delta$ is usually referred to as \textit{gauge} and the resulting construction can also be found in literature under the name \textit{gauge integral}. The same concept was independently, and for unrelated reasons, discovered by R. Henstock \cite{henstock61}. This version of the gauge integral is not the most general possible, as we could instead consider integrands that depend on point-interval pairs, but it is the one most suited to the theory of differential equations. \\[6pt]
$\mathbf{b)}$ For $U(\tau,t) = f(\tau) \, t$ this might seem like a simple modification of the Riemann definition, but the resulting integral is equivalent to the Perron integral. \\[6pt]
$\mathbf{c)}$ This definition not in conflict with the standard definition in which only partitions of the whole interval $I = [a,b]$ are considered, since any $\delta$-fine partition of $[\alpha, \beta] \subset [a,b]$ can be extended into a $\delta$-fine partition of $[a,b]$ with the help of Lemma \ref{cousin}. \\[6pt]
$\mathbf{d)}$ On linear spaces, many works consider a slightly wider definition of the Henstock-Kurzweil integral that directly establishes the definite integral $A \in E$ by demanding
\begin{equation*}
\Bigl\| A - \sum_{i = 1}^k  U(\tau_i, t_i) - U(\tau_i, t_{i-1}) \Bigr\| < \varepsilon. 
\end{equation*}
However, the SHK (Strong Henstock-Kurzweil) integration is better suited for problems that deal with abstract valued functions. For $E = \Rn$ these methods of integration are equivalent as a trivial consequence of the Saks-Henstock Lemma. 
\end{pozn}

\section{MC Integral}

The monotonically controlled (MC for short) integral was introduced in \cite{malybendova} by J.~Mal\'{y} and H.~Bendov\'{a}. Their aim was to build the foundations of integral theory at the generality of Perron integral while using unexpectedly simple definitions and proofs. They prove that the MC integral is equivalent to the SHK integral in the Stieltjes case. In this section we generalize the definition of the MC integral and the equivalence result to the case of coupled variables to make them compatible with the GODE theory.

\begin{defi}
A function $u \colon I \rightarrow E$ is called an indefinite MC integral of $U \colon I \times I \rightarrow E$ if there exists an increasing function $\xi \colon I \rightarrow \R$, called control function of $(U, u)$ on $I$, such that
\begin{equation*}
\lim_{t \, \to \, \tau, \; t \, \in \, I} \frac{ \|u(t) - u(\tau) - U(\tau, t) + U(\tau, \tau) \|  }{\xi(t) - \xi(\tau)} = 0 \quad \text{for} \quad \tau \in I. 
\end{equation*}
We define the definite MC integral of $U$ over $[\alpha , \beta ] \subset I$ as 
\begin{equation*}
(\text{MC})\int_{ \alpha }^{ \beta } \D_t \, U(\tau, t) = u( \beta ) - u( \alpha ).
\end{equation*}  
Note that if $\xi$ is a control function of $(U, u)$ on $I$, $\alpha > 0$ and $\zeta \colon I \rightarrow \R$ is a nondecreasing function, then $\alpha \xi + \zeta$ is also a control function of $(U,u)$ on $I$.

\end{defi}

\begin{veta} \label{integralequiv}
A function $u \colon I \rightarrow E$ is an indefinite MC integral of $U \colon I \times I \rightarrow E$ on $I$ if and only if it is an indefinite SHK integral of $U$ on $I$.
\end{veta}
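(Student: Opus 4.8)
The plan is to prove the two implications separately; \emph{MC $\Rightarrow$ SHK} is short, while \emph{SHK $\Rightarrow$ MC} carries the real weight. For the first, suppose $\xi$ is an increasing control function of $(U,u)$ on $I$, fix $\varepsilon>0$ and a nondegenerate $[a,b]\subset I$, and set $c=\varepsilon/(\xi(b)-\xi(a)+1)>0$. The defining limit of the MC integral provides, for each $\tau\in[a,b]$, a radius $\delta(\tau)>0$ such that $\|u(t)-u(\tau)-U(\tau,t)+U(\tau,\tau)\|\le c\,|\xi(t)-\xi(\tau)|$ whenever $t\in I$ with $|t-\tau|<\delta(\tau)$. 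For any $\delta$-fine partition $\{[t_{i-1},t_i],\tau_i\}_{i=1}^{k}$ of $[a,b]$ I would write the $i$-th increment $u(t_i)-u(t_{i-1})-U(\tau_i,t_i)+U(\tau_i,t_{i-1})$ as the difference of $u(t_i)-u(\tau_i)-U(\tau_i,t_i)+U(\tau_i,\tau_i)$ and $u(t_{i-1})-u(\tau_i)-U(\tau_i,t_{i-1})+U(\tau_i,\tau_i)$, apply the triangle inequality, and use that $\xi$ is nondecreasing and $t_{i-1}\le\tau_i\le t_i$; the $i$-th term of \eqref{skhdef} is then at most $c\,(\xi(t_i)-\xi(t_{i-1}))$, and the sum telescopes to $c\,(\xi(b)-\xi(a))<\varepsilon$.

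For \emph{SHK $\Rightarrow$ MC} the task is to manufacture one increasing function on $I$ out of a whole family of gauges. It suffices to do so on a compact interval and then glue: exhaust $I$ by an increasing sequence of compact subintervals, build a control function on each, and patch them into a single function on $I$, using that the MC quotient is unchanged by additive constants and that adding a nondecreasing function or multiplying by a positive constant preserves the control property (as noted after the definition). So assume $I=[a,b]$. For each $n\in\N$ fix a gauge $\delta_n$ witnessing the SHK property for $\varepsilon=4^{-n}$; since \eqref{skhdef} is a sum of norms, dropping terms only decreases it, so after completing with Lemma~\ref{cousin} the bound $<4^{-n}$ persists for every $\delta_n$-fine \emph{partial} partition of $[a,b]$. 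For $[c,d]\subset[a,b]$ put
\[
F_n([c,d])=\sup_{P}\ \sum_{([c_j,d_j],\sigma_j)\in P}\big\|u(d_j)-u(c_j)-U(\sigma_j,d_j)+U(\sigma_j,c_j)\big\|,
\]
the supremum over all $\delta_n$-fine partitions $P$ of $[c,d]$. Then $0\le F_n([c,d])\le 4^{-n}$, $F_n$ is monotone under inclusion, and $F_n$ is superadditive over adjacent intervals (a union of $\delta_n$-fine partitions of adjacent intervals is again $\delta_n$-fine). Hence $\sigma_n(t):=F_n([a,t])$ is nondecreasing with values in $[0,4^{-n}]$, the series $\xi(t):=t+\sum_{n=1}^{\infty}2^{n}\sigma_n(t)$ converges, and $\xi$ is strictly increasing on $[a,b]$ because of the leading term.

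To check that $\xi$ controls $(U,u)$ I would fix $\tau\in[a,b]$ and $\rho>0$, pick $n$ with $2^{-n}<\rho$, and take $t\in[a,b]$ with $0<|t-\tau|<\delta_n(\tau)$; say $\tau<t$. Then $\{[\tau,t],\tau\}$ is itself a $\delta_n$-fine partition of $[\tau,t]$, so $F_n([\tau,t])\ge\|u(t)-u(\tau)-U(\tau,t)+U(\tau,\tau)\|$, and superadditivity gives $\sigma_n(t)-\sigma_n(\tau)\ge F_n([\tau,t])$. Therefore
\[
\xi(t)-\xi(\tau)\ \ge\ 2^{n}\big(\sigma_n(t)-\sigma_n(\tau)\big)\ \ge\ 2^{n}\,\big\|u(t)-u(\tau)-U(\tau,t)+U(\tau,\tau)\big\|,
\]
so the MC quotient at $t$ is at most $2^{-n}<\rho$; the case $t<\tau$ is symmetric, and since $\rho$ was arbitrary the limit equals $0$. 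The one genuinely delicate point is this choice of weights $2^{n}$ in $\xi$: they must grow fast enough that $\xi(t)-\xi(\tau)$ dominates the $n$-th scale of oscillation of $u-U$ near every point, yet be controlled against the smallness $4^{-n}$ of the gauges so that the series converges. The triangle-inequality/telescoping argument of the first direction, the elementary properties of $F_n$, and the exhaustion-and-patching reduction are all routine.
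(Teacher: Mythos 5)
Your proof is correct and follows essentially the same route as the paper: the MC-to-SHK direction by splitting each interval at its tag and telescoping against $\xi$, and the converse by taking suprema of $\delta_n$-fine Riemann-type sums at geometric $\varepsilon$-scales and summing them with weights into a single increasing control function (your $F_n([a,\cdot])$ with weights $2^n$ and $\varepsilon_n=4^{-n}$ is the paper's $\xi_n$ with weights $n$ and $\varepsilon_n=2^{-n}$). The only cosmetic difference is that you treat a non-compact $I$ by exhaustion and gluing, whereas the paper builds the exhausting sequence $[a_n,b_n]$ directly into the definition of the $\xi_n$; both are fine.
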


\begin{proof}
First, we assume that $u$ is an indefinite MC integral of $U$ on $I$ with a control function $\xi$ satisfying $0 \leq \xi \leq 1$.\par
Choose $\varepsilon > 0$. Then for each $\tau \in I$ there exists $\delta(\tau) > 0$ such that every $t \in (\tau - \delta(\tau), \tau + \delta(\tau)) \cap I$ satisfies
\begin{equation*}
\|u(t) - u(\tau) - U(\tau, t) + U(\tau, \tau) \| < \varepsilon |\xi(t) - \xi(\tau)|.
\end{equation*}    
For $[\alpha , \beta] \subset I$ and a $\delta$-fine partition $\{[t_{i-1}, t_i], \tau_i\}_{i = 1}^k$ of $[\alpha, \beta]$ we can see that
\begin{align*}
\|u(t_i) - u(\tau_i) - U(\tau_i, t_i) + U(\tau_i, \tau_i) \| &< \varepsilon (\xi(t_i) - \xi(\tau_i)), \\
\|u(\tau_i) - u(t_{i- 1}) + U(\tau_i, t_{i- 1}) - U(\tau_i, \tau_i) \| &< \varepsilon (\xi(\tau_i) - \xi(t_{i-1})).
\end{align*}
Therefore, we obtain
\begin{gather*}
\sum_{i = 1}^k \|u(t_i) - u(t_{i- 1}) - U(\tau_i, t_i) + U(\tau_i, t_{i-1}) \|  \\
< \varepsilon \sum_{i = 1}^k  (\xi(t_i) - \xi(\tau_i) + \xi(\tau_i) - \xi(t_{i-1})) = \varepsilon (\xi(b) - \xi(a)) \leq \varepsilon .
\end{gather*}
Now, let $u$ be an indefinite SHK integral of $U$ on $I$. Given $[\alpha, \beta] \subset I$ and a partition $\A = \{[t_{i-1}, t_i], \tau_i\}_{i=1}^k$ of $[\alpha, \beta]$, set
\begin{equation*}
\sum_\A (u, U) :=  \sum_{i=1}^k \|u(t_i) - u(t_{i- 1}) - U(\tau_i, t_i) + U(\tau_i, t_{i-1}) \|.
\end{equation*}
Denote $a = \inf I$ and $b = \sup I$. Let $\{a_n\}_{q = 1}^{\infty} \subset I$ be such that $a_n = a$ for every $n \in \N$ if $a \in I$ and $a_n \to a$ for $n \to \infty$, $a_{n + 1} < a_n$ if $a \notin I$. Similarly, let $\{b_n\}_{n = 1}^{\infty} \subset I$ be such that $b_n = b$ for every $n \in \N$ if $b \in I$ and $b_n \to b$ for $n \to \infty$, $b_{n + 1} > b_n$ if $b \notin I$. For every $n \in \N$ let $\delta_n \colon [a_n , b_n] \rightarrow \R^+$ correspond to $\varepsilon_k = 2^{-n}$. For $\tau \in (a_n , b_n]$ we define $\A_n(\tau)$ as the set of all $\delta_n$-fine partitions of $[a_n, \tau]$ and
\begin{equation*}
  \xi_n(x) = \left\{
  \begin{array}{l l}
    0, & \quad x \leq a_n,\\
    \displaystyle \xi_n(\tau) = \sup \, \Big\{\sum_\A (u, U) \cara \A \in \A_n(\tau) \Big\}, & \quad a_n < \tau \leq b_n , \\[-4pt]
    \xi_n(b_n), & \quad x > b_k. \\
  \end{array} \right.
\end{equation*} 
Finally, set
\begin{equation*}
\xi(\tau) = \tau + \sum_{n = 1}^{\infty} n \; \xi_n(\tau). 
\end{equation*}
Choose $\varepsilon > 0$ and $\tau \in I \smallsetminus \{a\}$. Find $n_0 \in \N$ and $\Delta > 0$ such that $(\tau - \Delta, \tau] \subset [a_{n_0}, b_{n_0}]$ and $1 / n_0 < \varepsilon$. Set $\delta = \min \, \{\delta_{n_0}(\tau), \Delta\}$. It follows that for $t \in (\tau - \delta, \tau)$ we have
\begin{gather*}
\|u(t) - u(\tau) - U(\tau, t) + U(\tau, \tau) \| \leq \xi_{n_0}(\tau) - \xi_{n_0}(t)  \\
\leq \sum_{n = 1}^{\infty} \frac{n}{n_0} (\xi_n(\tau)-\xi_n(t)) \leq \frac{1}{n_0}(\xi(\tau)-\xi(t))<\varepsilon (\xi(\tau)-\xi(t)).
\end{gather*}
Similarly, for $\tau \in I \smallsetminus \{b\}$ we find $n_0 \in \N$ and $\delta > 0$ such that 
\begin{equation*}
\|u(t) - u(\tau) - U(\tau, t) + U(\tau, \tau) \| \leq \xi_{n_0}(t) - \xi_{n_0}(\tau)  < \varepsilon (\xi(t) - \xi(\tau))
\end{equation*}
whenever $t \in (\tau , \tau + \delta)$.
\end{proof}

\begin{pozn}
From the proof of the previous theorem we can deduce that we could equivalently define the indefinite SHK integral on an arbitrary interval by demanding a single gauge that works on every closed subinterval. Proving this without the help of the MC integral is a standard but lengthy exercise associated with the Hake theorem.
\end{pozn}

\section{Generalized Ordinary Differential Equations} \label{secgode}

In this section we recall the main concepts of the theory of GODEs as developed in \cite{kurzweil57} and formulate the known
existence and uniqueness theorems which we want to generalize. Once again, we slightly modify the definitions to allow for intervals of arbitrary type. Roughly speaking, the GODE 
\begin{equation}\label{feq}
\dot{x} = \D_t \, F(x, \tau, t)
\end{equation}
is the task of finding a function $u$ satisfying \eqref{godesim1}. However, a great deal of the research is concerned with the \textit{restricted} GODEs
\begin{equation}\label{geq}
\dot{x} = \D_t \, G(x, t).
\end{equation}
The reason for this is that a large class of GODEs (see \cite{kurzweil11} chapter 23) can be simplified by putting
\begin{equation*}
G(x, t) = \mathrm{(SHK)} \int_a^t \D_s \, F(x, \tau, s).
\end{equation*} 
We are going to give the precise definitions now.
\begin{defi} \label{standardgode} Let $E$ be a normed linear space and $\Omega \subset E$. Let $F \colon \Omega \times I \times I \rightarrow E$ be given. We say that a function $u \colon I \rightarrow E$ is a solution of the equation \eqref{feq}
 on $I$ if $u(t) \in \Omega$ for all $t \in I$ and
\begin{equation*} 
u( \beta ) = u( \alpha ) + \text{(SHK)} \int_{ \alpha }^{ \beta } \D_t \, F(u(\tau), \tau, t) \quad \text{for} \quad [\alpha , \beta ] \subset I. 
\end{equation*}
Specifically, for $I = [a,b]$ it means that for every $\varepsilon > 0$ there exists $\delta \colon [a,b] \rightarrow \R^+$ such that every $\delta$-fine partition $\{[t_{i-1}, t_i], \tau_i\}_{i = 1}^k$ of $[a,b]$ satisfies 
\begin{equation} \label{sumcalcel}
\sum_{i=1}^k \| u(t_i) - u(t_{i-1}) - F(u(\tau_i), \tau_i, t_i) + F(u(\tau_i), \tau_i, t_{i-1} )  \| < \varepsilon.
\end{equation}
Let $G \colon \Omega \times I \rightarrow E$ be given. We say that a function $u \colon I \rightarrow E$ is a solution of the equation \eqref{geq} on $I$ if $u(t) \in \Omega$ for all $t \in I$ and
\begin{equation*}
u( \beta ) = u( \alpha ) + \text{(SHK)} \int_{ \alpha }^{ \beta } \D_t \, G(u(\tau), t) \quad \text{for} \quad [ \alpha, \beta ] \subset I. 
\end{equation*} 
We say that the equation \eqref{feq} is \textit{normalized} if $F(x,\tau,\tau) = x$ for each $(x,\tau) \in \Omega \times I$. We 
can normalize any equation of this type by setting
\begin{equation*}
\tilde{F}(x, \tau, t) = x + F(x, \tau, t) - F(x, \tau, \tau).
\end{equation*}
Due to cancellation in \eqref{sumcalcel}, the equations
\begin{equation*}
\dot{x} = \D_t \, F(x, \tau, t) \quad \text{and} \quad \dot{x} = \D_t \, \tilde F(x, \tau, t)
\end{equation*} have the same solutions. However, normalization of the restricted equation \eqref{geq} leads to the unrestricted equation
\begin{equation*}
\dot{x} = \D_t \, [x+G(x, t)-G(x,\tau)].
\end{equation*}
\end{defi}
Now, we are ready to formulate the existence and uniqueness results
that we want to extend.

\begin{defi}
We say that $\omega \colon \R^+_0 \rightarrow \R^+_0$ is a modulus function if it is continuous, nondecreasing, $\omega(0) = 0$ and $\omega(\nu) > 0$ for $\nu > 0$. \par
We say that $\omega \colon \R^+_0 \rightarrow \R^+_0$ is an Osgood type modulus function if it is a modulus function and for every $\nu > 0$ it satisfies
\begin{equation} \label{osgoodtype}
\lim_{r \to 0_+} \int^\nu_r \frac{1}{\omega(s)} \ddd s = + \infty. 
\end{equation}
\end{defi}

\begin{defi}
We say that $G \colon \O \rightarrow \Rn$ belongs to the class $\mathcal{F}(\O, h, \omega)$ if
\begin{gather} 
\| G(x, t) - G(x, s) \| \leq |h(t) - h(s)|,    \label{calf1} \tag{$\mathcal{F}_1$} \\
\| G(x, t) - G(x, s) - G(y, t) + G(y, s) \| \leq \omega(\|x - y \|) \, |h(t) - h(s)| \label{calf2} \tag{$\mathcal{F}_2$}
\end{gather}
for $(x, t), (x, s), (y, t), (y, s) \in \O$, where $\O = \U_r \times (a,b)$, $h \colon [a,b] \rightarrow \R$ is nondecreasing and $\omega$ is a modulus function. The symbol $\U_r$ stands for $\U(0, r) \subset \Rn$.
\end{defi}

\begin{veta} [Š.~Schwabik, \cite{schwabik92}, page 122] \label{schwabunique}
Let $G \colon \O \rightarrow \Rn$ belong to the class $\mathcal{F}(\O, h, \omega)$, where $\omega$ is an Osgood type modulus function and the function $h \colon [a,b] \rightarrow \R$ is nondecreasing and continuous from the left. Then every solution $x \colon (a,b) \rightarrow \Rn$ of $\dot{x} = \D_t \, G(x, t)$ such that $(x(\tau), \tau) \in \O$ and
\begin{equation} \label{initialjump}
x(\tau) + \lim_{t \to \tau_+} G(x(\tau), t) - G(x(\tau), \tau) \in \U_r
\end{equation}
is locally unique in the future at $(x(\tau), \tau)$. (In terms of Definition \ref{unique-future})
\end{veta}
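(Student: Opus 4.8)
The plan is to reduce the theorem to a concrete claim: there is $\eta>0$ such that any two solutions $x,y$ of \eqref{geq} on $[\tau,\tau+\eta]$ with $x(\tau)=y(\tau)=:x_0$ coincide on $[\tau,\tau+\eta]$; in view of Definition \ref{unique-future} this is precisely local uniqueness in the future at $(x_0,\tau)$. Before anything else I would record two elementary facts about solutions. Testing the solution condition \eqref{sumcalcel} on a $\delta$-fine partition of an arbitrary $[s,t]\subset(a,b)$ and using \eqref{calf1} together with the monotonicity of $h$, a telescoping estimate gives $\|x(t)-x(s)\|\le h(t)-h(s)$ for $s\le t$; hence every solution has bounded variation, is regulated, and --- since $h$ is left-continuous --- is left-continuous. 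A short computation with the defining sums, isolating the first (arbitrarily short) subinterval, also yields the jump relation $x(\tau+)=x_0+\lim_{t\to\tau_+}\bigl(G(x_0,t)-G(x_0,\tau)\bigr)$, which by \eqref{initialjump} lies in the \emph{open} ball $\U_r$. Combining this with $\|x(t)-x(\tau+)\|\le h(t)-h(\tau+)\to0$ as $t\to\tau_+$ --- a bound uniform over solutions through $(x_0,\tau)$ --- lets me fix $\eta>0$, depending only on $x_0$, so that $(x(t),t),(y(t),t)\in\O$ for every $t\in[\tau,\tau+\eta]$; this is where the interior condition \eqref{initialjump} enters, and it makes \eqref{calf1}, \eqref{calf2} available along the whole interval.

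Next I would set $\varphi(t)=\|x(t)-y(t)\|$, which is regulated, left-continuous and nonnegative with $\varphi(\tau)=0$, and derive the integral inequality
\[
\varphi(\beta)\;\le\;\varphi(\alpha)+\int_\alpha^\beta\omega\bigl(\varphi(\sigma)\bigr)\ddd h(\sigma),\qquad\tau\le\alpha<\beta\le\tau+\eta.
\]
To this end, for $\varepsilon>0$ I choose a gauge finer than the ones furnished by the solution property of $x$, by that of $y$, and by the definition of the Stieltjes integral $\int_\alpha^\beta\omega(\varphi)\ddd h$ (the SHK integral of $(\tau,t)\mapsto\omega(\varphi(\tau))\,h(t)$, which is well defined since $\omega\circ\varphi$ is regulated and $h$ has bounded variation), take one common $\delta$-fine partition via Lemma \ref{cousin}, subtract the two Riemann sums attached to $x$ and to $y$, bound each summand by \eqref{calf2}, and let $\varepsilon\to0$. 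Setting $\alpha=\tau$ and passing to the nondecreasing envelope $m(t):=\sup_{\sigma\in[\tau,t]}\varphi(\sigma)$, which inherits left-continuity from $\varphi$, the monotonicity of $\omega$ upgrades this to $m(t)\le\int_\tau^t\omega(m(\sigma))\ddd h(\sigma)$ for all $t\in[\tau,\tau+\eta]$.

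The last step is an Osgood-type argument. Suppose $m\not\equiv0$; since $m$ is nondecreasing and left-continuous with $m(\tau)=0$, this forces $m(\tau+\eta)>0$. Put $t_*=\sup\{t:m(t)=0\}$; left-continuity gives $m(t_*)=0$, while $m(t)>0$ for $t\in(t_*,\tau+\eta]$, and since $\omega(0)=0$ the part of the integral over $[\tau,t_*]$ vanishes, so $0<m(t)\le R(t):=\int_{t_*}^t\omega(m(\sigma))\ddd h(\sigma)$ for $t\in(t_*,\tau+\eta]$, where $R$ is nondecreasing with $R(t_*)=0$ and $R(t)>0$ for $t>t_*$. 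With $\Phi(v):=\int_{v_0}^v\tfrac{\mathrm{d}r}{\omega(r)}$ for a fixed $v_0>0$, the crucial estimate is
\[
\Phi\bigl(R(t)\bigr)-\Phi\bigl(R(s)\bigr)\;\le\;h(t)-h(s)\qquad(t_*<s<t\le\tau+\eta),
\]
which I would prove by splitting $h$ into its continuous and jump parts: at a point $\sigma^{*}$ where $h$ jumps, $R$ has right jump $\omega(m(\sigma^{*}))\bigl(h(\sigma^{*}+)-h(\sigma^{*})\bigr)$ and, $\omega$ being nondecreasing with $R(\sigma^{*})\ge m(\sigma^{*})$, the right jump of $\Phi\circ R$ there is at most $h(\sigma^{*}+)-h(\sigma^{*})$; on the continuous part one refines the partition --- using continuity of $\omega$ and that $R$ stays bounded away from $0$ on $[s,t]$ --- so that $\Phi(R(\sigma_i))-\Phi(R(\sigma_{i-1}))\le(1+o(1))\bigl(h(\sigma_i)-h(\sigma_{i-1})\bigr)$ on each subinterval, and sums. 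I expect this interplay between the gauge and the jumps of $h$ to be the main obstacle. Granting the estimate, fix $t\in(t_*,\tau+\eta]$ and let $s\to t_*$ with $s>t_*$: since the right jump of $R$ at $t_*$ is $\omega(m(t_*))\bigl(h(t_*+)-h(t_*)\bigr)=0$, the positive quantity $R(s)$ tends to $0$, so $\Phi(R(s))\to-\infty$ by the Osgood condition \eqref{osgoodtype}; thus the left side of the estimate tends to $+\infty$ while the right side stays at most $h(\tau+\eta)-h(t_*)<\infty$ --- a contradiction. Hence $m\equiv0$, i.e.\ $\varphi\equiv0$ and $x\equiv y$ on $[\tau,\tau+\eta]$, which is the asserted local uniqueness in the future.
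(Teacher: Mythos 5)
Your argument is correct in substance, but it is a genuinely different route from the one taken in this paper. Here Theorem \ref{schwabunique} is never proved by a direct Gronwall--Osgood argument: it is quoted from \cite{schwabik92}, and within the paper it is recovered in Section \ref{seclinear} as a special case of the metric-space result, Theorem \ref{mainunique} (via Theorem \ref{mainunique-lin}): one normalizes $G$ to $\tilde F(x,\tau,t)=x+G(x,t)-G(x,\tau)$ and checks that \eqref{calf2} together with left-continuity of $h$ yields $\mathbf{(\widehat{U1})}$ and $\mathbf{(\widehat{U2})}$ with $\xi=h$, while \eqref{initialjump} only serves to keep the continued solution inside $\U_r$. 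The proof of Theorem \ref{mainunique} itself avoids any integral inequality: it works with $\Delta(t)=|u(t)-v(t)|$, shows left lower semicontinuity from $\mathbf{(U1)}$, and applies the Dini-derivative monotonicity Lemma \ref{thompson} to $\Psi=\Phi(\Delta)+2\xi$, deriving a contradiction with $\Psi(t)\to+\infty$ at the splitting point; this is what allows the paper to dispense with \eqref{calf1}, with bounded variation, and with any integration over the state space, and hence to work in metric spaces with mere $\liminf$ hypotheses. Your proof is instead the classical Kurzweil--Stieltjes route (essentially Schwabik's own): the a priori bound $\|x(t)-x(s)\|\le h(t)-h(s)$ from \eqref{calf1}, the jump formula at $\tau$, the inequality $\varphi(\beta)\le\varphi(\alpha)+\int_\alpha^\beta\omega(\varphi)\ddd h$ from \eqref{calf2} and a common fine partition, and an Osgood estimate $\Phi(R(t))-\Phi(R(s))\le h(t)-h(s)$ contradicting \eqref{osgoodtype}. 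That last estimate is the only place requiring real care and is only sketched in your proposal, but it does go through along the lines you indicate: left-continuity of $h$ makes $R$ left-continuous with only right jumps, the jump contributions are controlled by $R\ge m$ and monotonicity of $\omega$, and the remaining part is handled by a division of $[s,t]$ with small oscillation of $h$ between the finitely many large jumps, uniform continuity of $\omega$, and the positive lower bound $\omega(R(s))>0$. In short: your approach buys a self-contained classical proof of this particular theorem, at the price of leaning on \eqref{calf1} (BV, left-continuous solutions) and Stieltjes machinery; the paper's approach buys a single pointwise argument that covers this theorem and extends to regulated data and metric spaces.
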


\begin{veta} [Š.~Schwabik, \cite{schwabik92}, page 114] \label{schwabexist}
Let $G \colon \O \rightarrow \Rn$ belong to the class $\mathcal{F}(\O, h, \omega)$ and let $(x_{\tau}, \tau) \in \O$ be such that
\begin{equation*}
x_{\tau} + \lim_{t \to \tau_+} G(x_{\tau}, t) - G(x_{\tau}, \tau) \in \U_r.
\end{equation*}
Then there exists $\Delta > 0$ and a function $x \colon [\tau , \tau + \Delta] \rightarrow \Rn$ which is a solution of $\dot{x} = \D_t \, G(x, t)$ on $[\tau , \tau + \Delta]$ with $x(\tau) = x_\tau$.
\end{veta}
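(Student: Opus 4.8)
\emph{Proof strategy.} I would realise the local solution as a fixed point, on a short interval $[\tau,\tau+\Delta]$, of the operator
\begin{equation*}
(Tu)(t) = x_\tau + (\mathrm{SHK})\int_\tau^t \D_s\, G(u(s), s),
\end{equation*}
and apply Schauder's fixed point theorem in the Banach space of $\Rn$-valued regulated functions on $[\tau,\tau+\Delta]$ with the supremum norm. (The classical alternative is to build Euler-type polygonal approximations and extract a pointwise limit via Helly's selection principle; the functional-analytic route is shorter here.)

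First I fix the geometry. Put $p := x_\tau + \lim_{t\to\tau_+}\big(G(x_\tau,t)-G(x_\tau,\tau)\big)$; by hypothesis $p\in\U_r$, so there is $c>0$ with $\B(p,3c)\subset\U_r$. Since $h$ is nondecreasing, $h(\tau+)$ exists, and we may pick $\Delta>0$ with $\tau+\Delta<b$ and $h(\tau+\Delta)-h(\tau+)<c$. Let $S$ be the set of all regulated $u\colon[\tau,\tau+\Delta]\to\Rn$ with $u(\tau)=x_\tau$, with $\|u(t)-u(s)\|\le|h(t)-h(s)|$ for all $s,t$, and with $u(t)\in\B(p,2c)$ for every $t\in(\tau,\tau+\Delta]$. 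Then $S$ is nonempty (take $u$ equal to $x_\tau$ at $\tau$ and to $p$ elsewhere), convex, bounded, and closed in the supremum norm, since each defining condition survives passage to uniform limits; moreover $\B(p,2c)\subset\U_r$ and $x_\tau\in\U_r$, so $(u(s),s)\in\O$ for every $u\in S$ and every $s$.

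The analytic heart is the existence of the generalised integral $\int_\tau^t\D_s G(u(s),s)$ for $u\in S$, in fact for any regulated $u$ with $(u(s),s)\in\O$: as $u$ is a uniform limit of step functions, for which this integral reduces to an explicitly computable finite combination of increments of $G(\cdot,\cdot)$ in the second variable, and as $h$ — being nondecreasing — has bounded variation, condition $(\mathcal{F}_2)$ lets one pass to the limit by a Cauchy criterion; the same argument yields the jump identity $\lim_{t\to\tau_+}\int_\tau^t\D_s G(u(s),s)=\lim_{t\to\tau_+}\big(G(u(\tau),t)-G(u(\tau),\tau)\big)$, so $(Tu)(\tau+)=p$. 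Bounding the defining partition sums via $(\mathcal{F}_1)$ gives $\|(Tu)(t)-(Tu)(s)\|\le|h(t)-h(s)|$; hence $Tu$ is regulated, $(Tu)(\tau)=x_\tau$, and $\|(Tu)(t)-p\|\le h(\tau+\Delta)-h(\tau+)<c$ for $t\in(\tau,\tau+\Delta]$, so $T(S)\subseteq S$ and all iterates stay in $\U_r$. Continuity of $T$ on $S$ is immediate from $(\mathcal{F}_2)$: the integrand of $Tu-Tv$ is dominated by $\omega(\|u-v\|_\infty)\,|h(\cdot)-h(\cdot)|$, whence $\|Tu-Tv\|_\infty\le\omega(\|u-v\|_\infty)\big(h(\tau+\Delta)-h(\tau)\big)\to 0$ as $\|u-v\|_\infty\to0$. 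Finally, $T(S)$ is equibounded and equi-regulated — the modulus of regulatedness is inherited from $h$, uniformly in $u$ — hence relatively compact in the space of regulated functions by the Ascoli-type criterion, and Schauder's theorem supplies $x=Tx\in S$. By additivity of the integral, $x(\beta)-x(\alpha)=(\mathrm{SHK})\int_\alpha^\beta\D_t G(x(\sigma),t)$ for all $[\alpha,\beta]\subset[\tau,\tau+\Delta]$, i.e.\ $x$ is a solution of $\dot x=\D_t G(x,t)$ on $[\tau,\tau+\Delta]$ with $x(\tau)=x_\tau$.

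I expect the main obstacle to be the existence of $\int\D_s G(u(s),s)$ for regulated $u$, together with the accompanying jump identity: this is the step that genuinely exploits the structure $(\mathcal{F}_1)$--$(\mathcal{F}_2)$, and it needs care precisely because $u$ is merely regulated and $h$ may jump, so no appeal to continuity is available — the passage from step-function approximations to the true integrand has to be controlled through the variation of $h$. Verifying equi-regulatedness of $T(S)$ (for the compactness step) is the other delicate point. By contrast, the geometric bookkeeping — the choice of $\Delta$ and $c$, the inclusion $T(S)\subseteq S$, the invariance of $\U_r$, and the continuity of $T$ — is routine, resting directly on $(\mathcal{F}_1)$ and $(\mathcal{F}_2)$; this is also the only place where the hypothesis on the initial jump is used.
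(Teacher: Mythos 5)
Your strategy is sound and, modulo the standard background facts it invokes, it would prove the theorem; but it is genuinely different from how this statement is handled here. The paper does not prove Theorem \ref{schwabexist} at all: it is quoted from Schwabik's monograph, and in Section \ref{seclinear} it is recovered as a corollary of the paper's own Theorem \ref{mainexist}, whose proof is constructive rather than fixed-point theoretic. There, approximate solutions are built by gluing tangent-curve segments $t\mapsto F(y,\tau_j,t)$ along $\delta$-fine tagged partitions (condition $\mathbf{(E3)}$ is what lets one solve backwards for the segment through $v(t_{j-1})$), uniform estimates from $\mathbf{(E2)}$ and $\mathbf{(E4)}$ give equiregulatedness, compactness comes from the Fra\v{n}kov\'a-type Theorem \ref{franthm}, and the limit is verified to be a solution via the partition sums of Lemma \ref{metricequiv1}. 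That route deliberately avoids everything your argument leans on: forming $Tu(t)=x_\tau+(\mathrm{SHK})\int_\tau^t \D_s\,G(u(s),s)$ requires integrating in the target space, and the convexity of your set $S$ and Schauder's theorem require linear structure, none of which survives in a metric space; it also quietly imports nontrivial results from Schwabik's framework, namely the existence of $\int \D_s\,G(u(s),s)$ for merely regulated $u$ (Corollary 3.16 there --- note that a Cauchy criterion on step-function approximations gives convergence of the approximate integrals, but existence of the integral of the limit integrand additionally needs a uniform-convergence/Saks--Henstock type theorem) together with the Hake-type jump identity you use to get $(Tu)(\tau+)=p$. What each approach buys: yours is shorter and perfectly adequate for the Euclidean, restricted, $\mathcal{F}(\O,h,\omega)$ setting of Theorem \ref{schwabexist}, with \eqref{calf1} giving the invariance and compactness estimates and \eqref{calf2} the continuity of $T$, and it correctly isolates where the initial-jump hypothesis enters; the paper's construction is heavier but works without integration over the target space and with only regulated (non-BV) behaviour of the tangent curves, which is exactly what yields the more general Theorem \ref{mainexist} of which this statement is a special case.
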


We focus on the standard theorems displayed above rather than more recent results concerned with existence and uniqueness of GODEs in Banach spaces by A.\ Slavík \cite{slavik6}. The reason for this is that while Theorem \ref{schwabunique} works identically in Banach spaces, the problem of existence in this setting has different specifics that are less suited as the starting point when building our theory.

\begin{priklad} \label{carath} Let $f \colon \Rn \times [a,b] \rightarrow \Rn$ satisfy $\mathbf{(C1)}$-$\mathbf{(C3)}$ and fix $t_0 \in [a,b]$. Then the equation $\dot{x} = f(x,t)$ in the sense of Carath\'{e}odory is equivalent to equation \eqref{geq} with
\begin{equation*}
G(x, t) = \mathrm{(L)} \int_{t_0}^t f(x,s) \ddd s.
\end{equation*}
Under these conditions, we have both \eqref{calf1} and \eqref{calf2}. If the function $f$ also satisfies  
\begin{equation} \label{genlip}
\| f(x,t) - f(y,t) \| \leq l(t) \, \omega(\| x - y \|)
\end{equation} 
for $x, y \in \Rn$ and almost all $t \in [a,b]$, where $l$ is Lebesgue integrable on $[a,b]$ and $\omega$ is a modulus function, then $G \in \mathcal{F}(\O, h, \omega)$ with
\begin{equation*}
h(t) = \int_{t_0}^{t} m(s) + l(s) \ddd s.
\end{equation*}
Particularly, if $f$ is generalized Lipschitz i.e.\ satisfies \eqref{genlip} with $\omega(t) = t$, then we also have \eqref{osgoodtype}. For additional details and proof, see Chapter 5 in \cite{schwabik92}.
\end{priklad}

We can easily observe that the behaviour of the function $t \mapsto F(x, \tau, t)$ directly translates into quality of the solution. From this perspective, condition \eqref{calf1} forces solutions to have bounded variation. This is not ideal, as we can see from the following example.

\begin{priklad} \label{bvpriklad}
Consider the function
\begin{equation*}
  y(s) = \left\{
  \begin{array}{l l}
    s^2 \, \cos(\pi / s^2) & \quad 0 < s \leq 1,\\
   0 & \quad s = 0. \\
  \end{array} \right.
\end{equation*} 
It has a derivative everywhere on $[0,1]$. Hence, it solves the equation \eqref{geq} with
\begin{equation*}
G(x, t) = \mathrm{(SHK)} \int^t_0 \dot{y}(s) \ddd s = y(t).
\end{equation*}
However, since it does not have bounded variation, $G$ does not satisfy \eqref{calf1}.
\end{priklad}
We would prefer to deal with solutions in the space of regulated functions i.e.\ both one-sided limits exist at every point in time. Regulated functions play a very important role in the theory of Kurzweil integration and generalized ordinary differential equations ( see e.g.\ \cite{tvrdybook} ). In fact, many key results suggest that solutions in the space of regulated functions are the expected ideal outcome.


\section{Solutions in metric spaces}

In the remainder of the text $X$ always denotes a metric space. We adopt the convention that $|x - y|$ stands for the distance between $x$ and $y$.

\begin{defi} \label{mainmetricsol}

Let $I \subset \R$ be an interval, $\Omega \subset X$ and assume that $F \colon \Omega \times I \times I \rightarrow X$ satisfies $F(x, \tau, \tau) = x$. We say that $u \colon I \rightarrow X$ is a solution of $ \dot{x} = \D_t \, F(x, \tau, t)$ on $I$ if $u(\tau) \in \Omega$ for all $\tau \in I$ and 
\begin{equation} \label{metricsol}
(\text{SHK})\int_{\alpha}^{\beta} \D_t \, |u(t) - F(u(\tau),\tau, t)| = 0 \quad \text{for} \quad [\alpha , \beta ] \subset I,
\end{equation}
i.e.\ constant zero is an indefinite SHK integral of $U(\tau, t) = |u(t) - F(u(\tau),\tau, t)|$ on~$I$.
\end{defi}

\begin{lemmz} \label{metricequiv1}
Let $\Omega \subset X$. Let $F \colon \Omega \times I \times I \rightarrow X$ satisfy $F(x, \tau, \tau) = x$. 
Let $[a,b] \subset I$ and $u \colon I \to \Omega$. Then the following statements are 
equivalent:
\begin{enumerate}[label=$\mathrm{(\roman*)}$, noitemsep]
\item 
For every $\varepsilon > 0$ there exists $\delta \colon [a,b] \rightarrow \R^+$ such that every $\delta$-fine partition $\{[t_{i-1}, t_i], \tau_i\}_{i = 1}^k$ of $[a,b]$ satisfies
\begin{equation} \label{skhsol-}
\sum_{i=1}^k  \Big|  | u(t_i) - F( u(\tau_i), \tau_i, t_i) | -  | u(t_{i-1})- F(u(\tau_i), \tau_i, t_{i-1}) | \Big| < \varepsilon.
\end{equation}
\item
For every $\varepsilon > 0$ there exists $\delta \colon [a,b] \rightarrow \R^+$ such that every $\delta$-fine partition $\{[t_{i-1}, t_i], \tau_i\}_{i = 1}^k$ of $[a,b]$ satisfies
\begin{equation} \label{skhsol+}
\sum_{i=1}^k  \Big(  | u(t_i) - F( u(\tau_i), \tau_i, t_i) | +  | u(t_{i-1})- F(u(\tau_i), \tau_i, t_{i-1}) | \Big) < \varepsilon.
\end{equation}
\end{enumerate}
\end{lemmz}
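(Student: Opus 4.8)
The plan is to treat the two implications separately. The implication $\mathrm{(ii)} \Rightarrow \mathrm{(i)}$ is the cheap one: for nonnegative reals $p,q$ we have $|p-q| \le p+q$, so each summand of \eqref{skhsol-} is dominated by the corresponding summand of \eqref{skhsol+}. Hence any gauge $\delta$ witnessing \eqref{skhsol+} for a given $\varepsilon$ also witnesses \eqref{skhsol-} for the same $\varepsilon$, and we are done with this direction.

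The substance lies in $\mathrm{(i)} \Rightarrow \mathrm{(ii)}$, and the idea is to refine an arbitrary $\delta$-fine partition by cutting each interval at its tag, exploiting the normalization $F(x,\tau,\tau)=x$. Fix $\varepsilon > 0$ and let $\delta \colon [a,b] \to \R^+$ be the gauge provided by $\mathrm{(i)}$. Given a $\delta$-fine partition $\{[t_{i-1},t_i],\tau_i\}_{i=1}^k$ of $[a,b]$, form a new tagged partition by replacing each $[t_{i-1},t_i]$ with the two intervals $[t_{i-1},\tau_i]$ and $[\tau_i,t_i]$, both keeping the tag $\tau_i$, and omitting whichever of the two is degenerate (this happens exactly when $\tau_i$ is an endpoint of $[t_{i-1},t_i]$). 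Since $[t_{i-1},t_i] \subset (\tau_i-\delta(\tau_i),\tau_i+\delta(\tau_i))$ by \eqref{deltafine}, each sub-interval lies in the same ball, so the refined partition is again $\delta$-fine and therefore admissible in $\mathrm{(i)}$.

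Now I would evaluate the sum in \eqref{skhsol-} over this refined partition. Using $F(u(\tau_i),\tau_i,\tau_i)=u(\tau_i)$, the term contributed by $[t_{i-1},\tau_i]$ is $\big| |u(\tau_i)-F(u(\tau_i),\tau_i,\tau_i)| - |u(t_{i-1})-F(u(\tau_i),\tau_i,t_{i-1})|\big| = |u(t_{i-1})-F(u(\tau_i),\tau_i,t_{i-1})|$, and the term contributed by $[\tau_i,t_i]$ equals $|u(t_i)-F(u(\tau_i),\tau_i,t_i)|$; an omitted degenerate piece would have contributed $0$. Summing over $i$, the $\mathrm{(i)}$-sum over the refined partition is \emph{exactly} the $\mathrm{(ii)}$-sum \eqref{skhsol+} over the original partition. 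As the former is $<\varepsilon$ by the choice of $\delta$, so is the latter, which shows that $\delta$ witnesses $\mathrm{(ii)}$.

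I expect essentially no serious obstacle. The only points needing a line of care are verifying that the cut-at-the-tags refinement is genuinely a $\delta$-fine partition (immediate from \eqref{deltafine}) and the bookkeeping when a tag sits at an interval endpoint (harmless, since the corresponding collapsed term is $0$). The one genuine insight is that splitting at the tags converts the ``difference'' form in $\mathrm{(i)}$ into the ``sum'' form in $\mathrm{(ii)}$ precisely because of the normalization $F(x,\tau,\tau)=x$.
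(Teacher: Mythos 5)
Your proposal is correct and follows essentially the same argument as the paper: split each interval at its tag (keeping the tag), note the refined partition is still $\delta$-fine, and use the normalization $F(x,\tau,\tau)=x$ to turn the difference-form sum into the sum-form sum, with the reverse implication being the trivial inequality $|p-q|\le p+q$. Your extra remark about discarding degenerate pieces is harmless bookkeeping that the paper simply leaves implicit.
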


\begin{proof}
If $\{[t_{i-1}, t_i], \tau_i\}_{i=1}^k$ is $\delta$-fine, then the partition 
\begin{equation*}
\{[s_{j-1}, s_j], \sigma_j\}_{j=1}^{2k} = \{ \ldots, [t_{i-1}, \tau_i], \tau_i, [\tau_i, t_i], \tau_i, \ldots \}
\end{equation*}
is also $\delta$-fine. Hence, we get
\begin{equation} \label{skhsum}
\sum_{j=1}^{2k} \Big| |  u(s_j) - F( u(\sigma_j), \sigma_j, s_j) |  - | u(s_{j-1})- F(u(\sigma_j), \sigma_j, s_{j-1})  | \Big| < \varepsilon.
\end{equation}
For $j = 2i - 1$ we have
\begin{equation} \label{skhsub1}
\begin{split}
 | u(s_j) - F( u(\sigma_j), \sigma_j, s_j) |    & = |  u(\tau_i) - F( u(\tau_i), \tau_i, \tau_i) | = 0, \\
| u(s_{j-1})- F(u(\sigma_j), \sigma_j, s_{j-1})  | & = |  u(t_{i-1}) - F( u(\tau_i), \tau_i, t_{i - 1}) |.  
\end{split}
\end{equation}
Similarly, for $j = 2i$ we have
\begin{equation} \label{skhsub2}
\begin{split}
| u(s_j) - F( u(\sigma_j), \sigma_j, s_j) | & = |  u(t_i) - F( u(\tau_i), \tau_i, t_i) |, \\
| u(s_{j-1})- F(u(\sigma_j), \sigma_j, s_{j-1})  | & = |  u(\tau_i) - F( u(\tau_i), \tau_i, \tau_i) | = 0.
\end{split}
\end{equation}
By substituting \eqref{skhsub1} and \eqref{skhsub2} back into \eqref{skhsum} we get \eqref{skhsol+}. The reverse implication is trivial. 
\end{proof}

\begin{cor} Let $E$ be a normed linear space and $\Omega\subset E$.
Let $F \colon \Omega \times I \times I \rightarrow X$ satisfy $F(x, \tau, \tau) = x$
and $u\colon I\to \Omega$ be a function. Then $u$ is the solution of 
the equation $\dot x = D_tF(x,\tau, t)$ in the sense of Definition \ref{standardgode}
if and only if $u$ is its solution of the sense of Definition \ref{mainmetricsol}.
\end{cor}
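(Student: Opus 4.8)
The plan is to fix an arbitrary closed subinterval of $I$, write out both notions of solution there as sum conditions over $\delta$-fine partitions, and compare them term by term; the only genuinely geometric ingredient is the tag-refinement trick already used in the proof of Lemma \ref{metricequiv1}, combined with the normalization $F(x,\tau,\tau)=x$. Throughout I write $\|x-y\|$ for the distance on $E$, so that the integrand appearing in Definition \ref{mainmetricsol} is $U(\tau,t)=\|u(t)-F(u(\tau),\tau,t)\|$.

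First I would unwind the definitions. Since the candidate indefinite SHK integral of $U$ is the constant function $0$, writing out \eqref{skhdef} for this $U$ shows that $u$ is a solution in the sense of Definition \ref{mainmetricsol} precisely when for every $\varepsilon>0$ and every $[a,b]\subset I$ there is a gauge $\delta$ on $[a,b]$ for which \eqref{skhsol-} holds for all $\delta$-fine partitions; by Lemma \ref{metricequiv1} this is equivalent to the same statement with \eqref{skhsol-} replaced by \eqref{skhsol+}. On the other hand, Definition \ref{standardgode} (read on each closed subinterval, as the SHK integral over $I$ requires) says exactly that for every $\varepsilon>0$ and every $[a,b]\subset I$ there is a gauge $\delta$ for which \eqref{sumcalcel} holds for all $\delta$-fine partitions. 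Hence it suffices to fix $[a,b]\subset I$ and prove that the \eqref{skhsol+}-condition and the \eqref{sumcalcel}-condition on $[a,b]$ are equivalent.

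For a partition $\{[t_{i-1},t_i],\tau_i\}_{i=1}^k$ set $v_i^+=u(t_i)-F(u(\tau_i),\tau_i,t_i)$ and $v_i^-=u(t_{i-1})-F(u(\tau_i),\tau_i,t_{i-1})$, so that $v_i^+-v_i^-$ is precisely the vector whose norm is the $i$-th summand of \eqref{sumcalcel}. The implication \eqref{skhsol+} $\Rightarrow$ \eqref{sumcalcel} is immediate from $\|v_i^+-v_i^-\|\le\|v_i^+\|+\|v_i^-\|$, with the same $\varepsilon$ and the same gauge; so a solution in the sense of Definition \ref{mainmetricsol} is a solution in the sense of Definition \ref{standardgode}. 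For the converse, given a gauge $\delta$ furnished by the \eqref{sumcalcel}-condition and a $\delta$-fine partition $\{[t_{i-1},t_i],\tau_i\}_{i=1}^k$, I would apply \eqref{sumcalcel} to the refined partition $\{[t_{i-1},\tau_i],\tau_i,\,[\tau_i,t_i],\tau_i\}_{i=1}^k$, which is again $\delta$-fine; using $F(u(\tau_i),\tau_i,\tau_i)=u(\tau_i)$, the summand coming from $([\tau_i,t_i],\tau_i)$ collapses to $\|v_i^+\|$ and the one coming from $([t_{i-1},\tau_i],\tau_i)$ collapses to $\|v_i^-\|$, so the whole sum equals $\sum_{i=1}^k(\|v_i^+\|+\|v_i^-\|)<\varepsilon$, i.e.\ \eqref{skhsol+}. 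Invoking Lemma \ref{metricequiv1} once more yields Definition \ref{mainmetricsol}.

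I do not anticipate a real obstacle; the only point requiring care is the bookkeeping in the refinement step, namely that the hypothesis $F(x,\tau,\tau)=x$ is exactly what makes the two refined summands reduce to $\|u(t_i)-F(u(\tau_i),\tau_i,t_i)\|$ and $\|u(t_{i-1})-F(u(\tau_i),\tau_i,t_{i-1})\|$, together with the routine remark that SHK-integrability on $I$ means, by definition, the family of these conditions over all closed subintervals, so that working on a single $[a,b]$ loses nothing.
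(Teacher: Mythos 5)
Your proposal is correct. Both directions are sound: the reduction to the sum conditions \eqref{skhsol-}, \eqref{skhsol+}, \eqref{sumcalcel} on an arbitrary closed subinterval is exactly how the paper reads the two definitions, and your implication from Definition \ref{mainmetricsol} to Definition \ref{standardgode} (pass from \eqref{skhsol-} to \eqref{skhsol+} via Lemma \ref{metricequiv1}, then apply $\|v_i^+-v_i^-\|\le\|v_i^+\|+\|v_i^-\|$ termwise) is precisely the paper's argument for that direction. The only place you diverge is the converse: the paper disposes of it in one line with the reverse triangle inequality $\bigl|\,\|v_i^+\|-\|v_i^-\|\,\bigr|\le\|v_i^+-v_i^-\|$, summed over $i$, which takes \eqref{sumcalcel} directly to \eqref{skhsol-} with the same gauge; you instead rerun the tag-splitting refinement (the same trick that proves Lemma \ref{metricequiv1}) on the \eqref{sumcalcel} sums, using $F(u(\tau_i),\tau_i,\tau_i)=u(\tau_i)$ to collapse the two half-intervals, obtaining \eqref{skhsol+} with the same gauge and then \eqref{skhsol-} trivially. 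Your variant is valid and even yields the marginally stronger statement that \eqref{sumcalcel} implies \eqref{skhsol+} gauge-for-gauge, but it duplicates work already encapsulated in Lemma \ref{metricequiv1}; the paper's reverse-triangle step is the shorter and more natural counterpart to the triangle-inequality step you already use. Minor bookkeeping points (degenerate subintervals in the refined partition, reading the $I$-indefinite integral as a family of conditions over closed subintervals) are handled at the same level of rigour as in the paper and cause no gap.
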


\begin{proof}
If $\{[t_{i-1}, t_i], \tau_i\}_{i=1}^k$ is a partition of $[a,b] \subset I$, then for each $i = 1, \ldots , k$ we have
\begin{align*}
\Bigl| &\| u(t_i) - F( u(\tau_i), \tau_i, t_i) \| -  \| u(t_{i-1})- F(u(\tau_i), \tau_i, t_{i-1}) \| \Big| \\
&\quad \leq \big\| u(t_i) -u(t_{i-1}) - F( u(\tau_i), \tau_i, t_i) +   F(u(\tau_i), \tau_i, t_{i-1}) \big\| \\
&\quad \leq \| u(t_i) - F( u(\tau_i), \tau_i, t_i) \| +  \| u(t_{i-1})- F(u(\tau_i), \tau_i, t_{i-1}) \|.
\end{align*}
We add the above inequalities over $i = 1, \ldots , k$ and recall that Definition \ref{mainmetricsol} and Definition \ref{standardgode} correspond to the first and second sum respectively. We then see that one implication is a consequence of the first inequality while the reverse implication follows from the second inequality and Lemma \ref{metricequiv1}.
\end{proof}

\begin{pozn}
Using Theorem \ref{integralequiv} and normalization of $F$ we can immediately deduce that $u \colon I \rightarrow X$ is a solution of $ \dot{x} = \D_t \, F(x, \tau, t)$ on $I$ if and only if there exists an increasing function $\xi \colon I \rightarrow \R$ such that for all $\tau \in I$ we have $u(\tau) \in \Omega$ and 
\begin{equation} \label{mcsol}
\lim_{t \, \to \, \tau, \; t \, \in \, I} \frac{|u(t) - F(u(\tau),\tau,t)|}{\xi(t) - \xi(\tau)} = 0. 
\end{equation}
In this case we will refer to $\xi$ as control function of $(F, u)$ rather than $(U, 0)$ (where $U$ is once again the numerator of \eqref{mcsol}) and tacitly employ properties which the control function inherits from the MC integral.
\end{pozn}

\section{Uniqueness} \label{secunique}

\begin{defi} \label{unique-future}
Let $X$ be a metric space, $I \subset \R$ an interval and $u \colon I \rightarrow X$ a solution of $\dot{x} = \D_t \, F(x, \tau, t)$ on $[ \tau, \tau + \delta] \subset I$ with $u(\tau) = x_\tau$. We say that $u$ is locally unique in the future at the point $(x_\tau, \tau)$ if for any solution $v \colon I \rightarrow X$ of ${x} = \D_t \, F(x, \tau, t)$ on $[\tau, \tau + \eta]$ such that $v(\tau) = x_\tau$ there exists $\zeta > 0$ such that $v(t) = u(t)$ for $t \in [\tau, \tau + \zeta]$.
\end{defi}

\begin{lemmz} \label{thompson} Let $f \colon (a,b) \rightarrow \R$ be a real function which at every point $x \in (a,b)$ satisfies
\begin{enumerate}[label=$\mathbf{(T \arabic*)}$, noitemsep]
\item $\D^+ f(x) \geq 0$,
\item $\displaystyle \limsup_{h \to 0_+} f(x-h) \leq f(x)$.
\end{enumerate}

Then $f$ is nondecreasing on $(a,b)$.
\end{lemmz}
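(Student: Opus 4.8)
The plan is to prove this \emph{increasing function theorem} under the weakened hypotheses by the standard supremum argument, after first reducing to a strict form of (T1) by a linear perturbation.

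First I would fix $\varepsilon>0$ and set $g_\varepsilon(x)=f(x)+\varepsilon x$. Since $x\mapsto\varepsilon x$ is differentiable, $\D^+ g_\varepsilon(x)=\D^+ f(x)+\varepsilon\ge\varepsilon>0$ at every $x\in(a,b)$, and since $x\mapsto\varepsilon x$ is continuous, $g_\varepsilon$ still satisfies (T2). If I can show every such $g_\varepsilon$ is nondecreasing on $(a,b)$, then for $c<d$ we get $f(c)+\varepsilon c\le f(d)+\varepsilon d$, and letting $\varepsilon\to0_+$ yields $f(c)\le f(d)$, which is the claim. So it suffices to prove: a function $g$ on $(a,b)$ with $\D^+ g>0$ everywhere and satisfying (T2) is nondecreasing.

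For this I would argue by contradiction. Suppose $c<d$ in $(a,b)$ with $g(c)>g(d)$, and consider $A=\{x\in[c,d]:g(x)\ge g(c)\}$ and $x_0=\sup A$ (nonempty since $c\in A$). Two applications of (T2) pin down $x_0$. At the point $d$, since $\limsup_{h\to0_+}g(d-h)\le g(d)<g(c)$, there is a left-neighbourhood of $d$ on which $g<g(c)$, so $x_0<d$. At the point $x_0$, if $x_0\notin A$ then choosing $x_n\in A$ with $x_n\uparrow x_0$ and writing $h_n=x_0-x_n\to0_+$ gives $\limsup_{h\to0_+}g(x_0-h)\ge\limsup_n g(x_n)\ge g(c)$, so $g(x_0)\ge g(c)$ by (T2); hence in any case $x_0\in A$. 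Finally $\D^+ g(x_0)>0$ forces a sequence $h_n\to0_+$ with $g(x_0+h_n)>g(x_0)\ge g(c)$; since $x_0<d$, for large $n$ the point $x_0+h_n$ lies in $[c,d]$ and thus in $A$, contradicting $x_0=\sup A$.

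The main obstacle — and the reason the perturbation step is not cosmetic — is that (T1) gives $\D^+ f\ge0$, not $\D^+ f>0$: with a non-strict Dini derivative the supremum argument collapses, because $\D^+ f(x_0)\ge0$ only produces points with $f(x_0+h)>f(x_0)-\eta h$ for arbitrarily small prescribed $\eta$, which need not stay above the threshold $g(c)$. Adding the term $\varepsilon x$ turns this into a genuine strict inequality and is exactly what makes the argument close; the remaining care is only to verify (as above) that the perturbation preserves (T2) and shifts $\D^+ f$ by precisely the additive constant $\varepsilon$.
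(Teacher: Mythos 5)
Your argument is correct, but it takes a different route from the paper in the trivial sense that the paper offers no proof at all: Lemma \ref{thompson} is disposed of by a citation to Thomson's \emph{Real Functions} (Theorem 55.10), a general monotonicity theorem for Dini derivatives. What you give instead is a complete, self-contained elementary proof, and it checks out: the perturbation $g_\varepsilon = f + \varepsilon\,\mathrm{id}$ does shift the upper right Dini derivative by exactly $\varepsilon$ (the added term has a genuine limit, so it passes through the $\limsup$, even when $\D^+f(x)=+\infty$) and preserves $\mathbf{(T2)}$ by continuity; the reduction to the strict case and the passage $\varepsilon\to0_+$ are standard; and in the supremum argument each of the three uses of the hypotheses is sound --- $\mathbf{(T2)}$ at $d$ pushes $x_0=\sup A$ strictly below $d$, $\mathbf{(T2)}$ at $x_0$ (applied along a sequence $x_n\uparrow x_0$ in $A$) forces $x_0\in A$, and the strict inequality $\D^+g_\varepsilon(x_0)>0$ produces points of $A$ to the right of $x_0$, a contradiction. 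Your closing remark is also the right diagnosis of why the perturbation is essential: with only $\D^+f\ge0$ the last step fails, since the Dini derivative then only yields points where $f$ drops by less than $\eta h$ for arbitrarily small $\eta$, not points staying above the threshold. The trade-off between the two treatments is the usual one: the citation buys brevity and places the lemma in the broader family of monotonicity theorems catalogued by Thomson, while your proof makes the paper self-contained at the cost of half a page and proves exactly the statement needed, no more.
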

\begin{proof}
See \cite{thomson}, Theorem 55.10, page 135.
\end{proof}

\begin{veta} \label{mainunique}
Let $X$ be a metric space, $\Omega \subset X$, let $F \colon \Omega \times [a,b) \times [a,b) \rightarrow X$ fulfil $F(x, \tau, \tau) = x$ and let us assume the following: 
\begin{enumerate}[label=$\mathbf{(U \arabic*)}$, noitemsep]
\item For every $x, y \in \Omega$ and $\tau \in (a,b)$ let
\begin{equation*}
\liminf_{t \to \tau_-}  |F(x, \tau, t) - F(y, \tau, t)| - |x - y|  \geq 0.
\end{equation*}
\item There exists an increasing function $\xi \colon [a,b) \rightarrow \R$ and an Osgood type modulus function $\omega \colon \R^+_0 \rightarrow \R^+_0$ such that for every $x, y \in \Omega$ and $\tau \in (a,b)$ we have
\begin{equation*}
\liminf_{t \to \tau_+} \frac{ |F(x, \tau, t) - F(y, \tau, t)| - |x - y|  }{\xi(t) - \xi(\tau)} \leq \omega(|x-y|).
\end{equation*}
\end{enumerate}
Then every solution of $\dot{x} = \D_t \, F(x, \tau, t)$ on $[a,b)$ is locally unique in the future.
\end{veta}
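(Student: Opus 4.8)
\medskip

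\textit{Proof proposal.} The plan is to fix an arbitrary point $\tau_0 \in [a,b)$ and two solutions $u,v$ of $\dot{x} = \D_t \, F(x,\tau,t)$ with $u(\tau_0) = v(\tau_0)$, both defined on a common interval $[\tau_0,\tau_0+\zeta]$, and to show that the nonnegative function $\varphi(t) := |u(t)-v(t)|$, which vanishes at $\tau_0$, is identically zero on a right neighbourhood of $\tau_0$. The first move is to fix one increasing function $\xi$ that is simultaneously a control function for $u$, a control function for $v$, and a witness for $\mathbf{(U2)}$: the choice $\xi = \xi_{\mathbf{(U2)}} + \xi_u + \xi_v$ does this, since adding a nondecreasing function to a control function again produces one, and enlarging the denominator in $\mathbf{(U2)}$ only weakens that hypothesis (its estimate is automatic wherever the numerator is nonpositive, and for a larger denominator the quotient only decreases on the remaining set).

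Next I would extract two pointwise estimates, valid at every interior point $\tau$ of the common interval. Writing $x=u(\tau)$, $y=v(\tau)$, $A(t)=|u(t)-F(x,\tau,t)|$, $B(t)=|v(t)-F(y,\tau,t)|$, the MC representation \eqref{mcsol} for $u$ and for $v$ says that $A(t)$ and $B(t)$ are $o\big(|\xi(t)-\xi(\tau)|\big)$ as $t\to\tau$. The triangle inequality $\varphi(t)\le A(t)+|F(x,\tau,t)-F(y,\tau,t)|+B(t)$ for $t>\tau$, divided by $\xi(t)-\xi(\tau)$ and combined with $\mathbf{(U2)}$ in the lower limit, gives
\begin{equation*}
\liminf_{t\to\tau_+}\frac{\varphi(t)-\varphi(\tau)}{\xi(t)-\xi(\tau)}\ \le\ \omega(\varphi(\tau)),
\end{equation*}
while the reverse triangle inequality $\varphi(t)\ge|F(x,\tau,t)-F(y,\tau,t)|-A(t)-B(t)$ for $t<\tau$, together with $\mathbf{(U1)}$ and $A(t),B(t)\to0$ as $t\to\tau_-$, gives the one-sided lower semicontinuity $\liminf_{t\to\tau_-}\varphi(t)\ge\varphi(\tau)$.

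The heart of the argument is an Osgood comparison. Assume, for contradiction, that $\varphi(t_1)=c>0$ for some $t_1$ in the common interval, and set $\tau_*=\sup\{\,t\le t_1:\varphi(t)=0\,\}$; left lower semicontinuity forces $\varphi(\tau_*)=0$ and $\tau_*<t_1$, and $\varphi>0$ on $(\tau_*,t_1]$. Let $\Omega(\nu)=\int_\nu^{1}\frac{1}{\omega(s)}\ddd s$, which is finite, continuous and strictly decreasing on $(0,\infty)$ with $\Omega(\nu)\to+\infty$ as $\nu\to0_+$ by \eqref{osgoodtype}, and put $g(t)=\xi(t)+\Omega(\varphi(t))$ on $(\tau_*,t_1]$. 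From $\Omega(\varphi(t))-\Omega(\varphi(\tau))\ge-\max\{\varphi(t)-\varphi(\tau),0\}/\omega(\varphi(\tau))$ (valid since $\omega$ is nondecreasing and $\varphi(\tau)>0$), the first estimate, and the identity $\liminf\max\{\cdot,0\}=\max\{\liminf(\cdot),0\}$, one checks that the upper right derivative of $g$ with respect to the strictly increasing function $\xi$ is nonnegative at every point of $(\tau_*,t_1)$; the second estimate and the monotonicity of $\xi$ give $\limsup_{t\to\tau_-}g(t)\le g(\tau)$. Lemma \ref{thompson}, in the form in which $\D^+$ is replaced by the upper right derivative with respect to $\xi$ — a form which follows from the same line of reasoning applied to $g+\eta\xi$ and letting $\eta\to0_+$ — then yields that $g$ is nondecreasing on $(\tau_*,t_1]$, hence bounded above there by $g(t_1)<\infty$. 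But the first estimate at $\tau_*$, where $\omega(\varphi(\tau_*))=0$, produces a sequence $t_n\downarrow\tau_*$ with $\varphi(t_n)\to0$, whence $g(t_n)=\xi(t_n)+\Omega(\varphi(t_n))\to+\infty$ — a contradiction. Thus $\varphi\equiv0$ on a right neighbourhood of $\tau_0$, i.e.\ $v=u$ there.

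I expect the monotonicity step to be the main obstacle. The differential inequality that drives the proof is intrinsically an inequality for the derivative of $g$ with respect to the control function $\xi$, and $\xi$ may be discontinuous and arbitrarily steep, so it cannot simply be converted into an ordinary derivative; Lemma \ref{thompson} has to be used in its version relative to $\xi$, and one must keep track of the behaviour at jump points of $\varphi$ and $\xi$ and at the left endpoint $\tau_0$ — where the first estimate degenerates, $u(\tau_0)=v(\tau_0)$ making the relevant numerator vanish identically and thereby forcing $\varphi(t)\to0$ as $t\to\tau_0$ from the right outright. The remaining ingredients — local existence of the two solutions on a common interval and the routine manipulations with lower limits — are straightforward.
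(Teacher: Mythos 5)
Your proposal is correct and takes essentially the same route as the paper: the Osgood potential composed with the distance $\varphi=|u-v|$, left lower semicontinuity from $\mathbf{(U1)}$, a right differential inequality relative to a common control function obtained from $\mathbf{(U2)}$ and the MC representation, monotonicity via Lemma \ref{thompson}, and a blow-up contradiction at the last coincidence point. The only (harmless) deviation is how the $\xi$-relative inequality is fed into Lemma \ref{thompson}: the paper works with $\Phi(\Delta)+2\,\xi$ after normalizing $\xi$ so that $\xi(t)-\xi(s)\geq t-s$, obtaining $\D^+\geq 0$ directly, whereas you perturb $g$ by $\eta\,\xi$ and let $\eta\to 0_+$ --- both reductions are valid.
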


\begin{proof}
Let $u, v$ be two solutions on $[a,b)$ and let $\alpha , \beta \in [a,b)$ be such that $\alpha < \beta$, $u(\alpha) = v (\alpha)$ and $u(\beta) \neq v(\beta)$. We also assume that $\xi$ already controls both $(F, u)$ and $(F, v)$ and that for $t > s$ we have
\begin{equation} \label{figl2}
\xi(t) - \xi(s) \geq t - s. 
\end{equation}
Condition $\mathbf{(U1)}$ implies that the function $\Delta(t) = |u(t) - v(t)|$ is lower semicontinuous from the left:  \par
Fix $\tau \in (a,b)$. Since both $u$ and $v$ solve $\dot{x} = \D_t \, F(x, \tau, t)$, we have
\begin{equation} \label{sol5}
\begin{split}
\lim_{t \to \tau} |u(t) - F(u(\tau), \tau, t)| = 0, \\
\lim_{t \to \tau} |v(t) - F(v(\tau), \tau, t)| = 0.
\end{split}
\end{equation}
Together with $\mathbf{(U1)}$ we obtain
\begin{align*}
&|u(\tau) - v(\tau)| \leq \liminf_{t \to \tau_-}  |F(u(\tau), \tau, t) - F(v(\tau), \tau, t)| \\
\leq &\liminf_{t \to \tau_-} \Big( |F(u(\tau), \tau, t) - u(t)| + | u(t) - v(t) | + | v(t) - F(v(\tau), \tau, t)| \Big)   \\
= &\liminf_{t \to \tau_-} |u(t) - v(t)|.
\end{align*}
Therefore, we can achieve $u \neq v$ everywhere on $(\alpha, \beta)$ by replacing $\alpha$ with the supremum of the set $\{t \in [\alpha, \beta) \cara u(t) = v(t)\}$. This is possible because $\Delta$ is lower semicontinuous from the left and $\Delta(\beta) > 0$. \par
We also notice that 
\begin{equation} \label{deltalimit}
\lim_{t \to \alpha_+} \Delta(t) = 0.
\end{equation}
This is easily observed from \eqref{sol5} and the fact that $u(\alpha) = v(\alpha)$, since
\begin{align*}
0 \leq |u(t) - v(t)| &\leq |u(t) - F(u(\alpha), \alpha, t)| + |F(u(\alpha), \alpha, t) - v(t)|   \\
&= |u(t) - F(u(\alpha), \alpha, t)| + |F(v(\alpha), \alpha, t) - v(t)|.
\end{align*}
Now, fix $\nu > 0$ and set 
\begin{equation*}
\Phi(r) = \int^{\nu}_{r} \frac{1}{\omega(s)} \,\mathrm{d}s.
\end{equation*}
Consider the function $\Psi(t) = \Phi(\Delta(t)) + 2 \, \xi(t)$ on $(\alpha, \beta)$. It is well defined everywhere on $(\alpha, \beta)$, because $\Delta(t) > 0$ for $t \in (\alpha, \beta)$. Since $\xi$ is bounded on $[\alpha,\beta]$, we use \eqref{osgoodtype}, \eqref{deltalimit} and the composite limit law to deduce
\begin{equation} \label{spor}
\lim_{t \rightarrow \alpha_+} \Psi(t) = + \infty.
\end{equation}
We now employ Lemma \ref{thompson} to show that $\Psi$ is nondecreasing on $(\alpha, \beta)$, which obviously contradicts \eqref{spor}. We begin by verifying $\mathbf{(T2)}$: \par
Fix $\tau \in (\alpha, \beta)$ and $\varepsilon > 0$. Find $\delta > 0$ such that for $|s - \Delta (\tau )| < \delta$ we have
\begin{equation} \label{sporhelp1}
| \Phi (s) - \Phi (\Delta (\tau ))| < \varepsilon.
\end{equation}
We have already established that $\Delta$ is lower semicontinuous from the left. Therefore, we can find $\gamma > 0$ such that all $t \in (\tau - \gamma , \tau )$ satisfy
\begin{equation} \label{sporhelp2}
\Delta (t) > \Delta (\tau) - \frac{\delta}{2}.
\end{equation}
Since $\Phi$ is decreasing, we use \eqref{sporhelp1} and \eqref{sporhelp2} to obtain
\begin{equation*}
\Phi (\Delta (t)) < \Phi (\Delta (\tau) - \frac{\delta}{2}) < \Phi (\Delta (\tau)) + \varepsilon.  
\end{equation*}
Combined with $\xi$ being increasing we have
\begin{align*}
\limsup_{t \to \tau_-} \Psi(t) &= \limsup_{t \to \tau_-} \Big( \Phi (\Delta (t)) + 2 \,  \xi(t) \Big) \\
&= \limsup_{t \to \tau_-} \Phi (\Delta (t)) + \lim_{t \to \tau_-} 2 \, \xi(t)  \\
&\leq \Phi (\Delta (\tau )) + 2 \, \xi(\tau) = \Psi(\tau).
\end{align*}
The rest of the proof will focus on showing that all $\tau \in (\alpha, \beta)$ satisfy 
\begin{equation} \label{upperder}
 \D^+ \, \Psi(\tau) \geq 0.
\end{equation}
We fix $\tau \in (\alpha, \beta)$ and distinguish two cases: \\[6pt]
$1)$ For every $\delta > 0$ there exists $t \in (\tau, \tau + \delta)$ such that $\Delta(t) \leq \Delta(\tau)$. This implies 
\begin{equation*}
\Phi(\Delta(t)) + 2 \, \xi(t) \geq \Phi(\Delta(\tau)) + 2 \, \xi(\tau),
\end{equation*}
which obviously results in \eqref{upperder}. \\[6pt]
$2)$ There exists $\delta_0 > 0$ such that for every $t \in (\tau, \tau + \delta_0)$ we have $\Delta(t) > \Delta(\tau)$.\\[6pt]
Since $\xi$ controls both $(F,u)$ and $(F, v)$, we can find $\delta_1 \in (0, \delta_0)$ such that all $t \in (\tau, \tau + \delta_1)$ satisfy
\begin{equation} \label{mcreseni}
\frac{|u(t) - F(u(\tau), \tau, t)|}{\xi(t) - \xi(\tau)} < \varepsilon, \quad \frac{|v(t) - F(v(\tau), \tau, t)|}{\xi(t) - \xi(\tau)} < \varepsilon.
\end{equation}
Now, fix $\varepsilon > 0$ such that
\begin{equation} \label{figl1}
3 \, \varepsilon < \omega(\Delta(\tau)). 
\end{equation} 
We use $\mathbf{(U2)}$ to find  $\delta_2 \in (0, \delta_1)$ such that for all $\delta \in (0, \delta_2)$ there exists $\tilde{\tau} \in (\tau, \tau + \delta)$ satisfying
\begin{equation} \label{podminka2}
\frac{ |F(u(\tau), \tau, \tilde{\tau}) - F(v(\tau), \tau, \tilde{\tau})| - \Delta(\tau)}{\xi(\tilde{\tau}) - \xi(\tau)} < \omega(\Delta(\tau)) + \varepsilon.
\end{equation}
Denote
\begin{align*}
E_u(t) &= |u(t) - F(u(\tau), \tau, t)|, \\
E_v(t) &= |v(t) - F(v(\tau),\tau, t)|  
\end{align*}
and notice that
\begin{equation} \label{doubletriangle}
\Delta(t) \leq |F(u(\tau), \tau, t) - F(v(\tau),\tau,t)| + E_u(t) + E_v(t).
\end{equation}
Applying \eqref{doubletriangle} to \eqref{podminka2} results in
\begin{equation} \label{odhad2}
\frac{ \Delta(\tilde{\tau}) - \Delta(\tau) - E_u(\tilde{\tau}) - E_v(\tilde{\tau})}{\xi(\tilde{\tau}) - \xi(\tau)} \leq \omega(\Delta(\tau)) + \varepsilon.
\end{equation}
We combine \eqref{odhad2} with \eqref{mcreseni} and obtain 
\begin{equation} \label{odhad}
\frac{ \Delta(\tilde{\tau}) - \Delta(\tau) }{\xi(\tilde{\tau}) - \xi(\tau)} \leq \omega(\Delta(\tau)) + 3 \, \varepsilon.
\end{equation}
Now, let $\sigma > \eta > 0$ and observe
\begin{equation*}
\Phi(\eta) - \Phi(\sigma) = \int^{\sigma}_{\eta} \frac{1}{\omega(s)} \,\mathrm{d}s \leq \int^{\sigma}_{\eta} \frac{1}{\omega(\eta)} \,\mathrm{d}s = \frac{1}{\omega(\eta)} (\sigma - \eta).
\end{equation*}
By choosing $\sigma = \Delta(\tilde{\tau})$ and $\eta = \Delta(\tau)$ we obtain 
\begin{equation} \label{lipsch}
\Phi(\Delta(\tau)) - \Phi(\Delta(\tilde{\tau})) \leq \frac{1}{\omega(\Delta(\tau))} (\Delta(\tilde{\tau}) - \Delta(\tau)).
\end{equation}
We combine \eqref{lipsch} with \eqref{odhad} and get
\begin{align*}
\omega(\Delta(\tau)) \Big( \Phi(\Delta(\tau)) - \Phi(\Delta(\tilde{\tau})) \Big) &\leq (\omega(\Delta(\tau)) + 3 \, \varepsilon)(\xi(\tilde{\tau}) - \xi(\tau)),  \\[4pt]
\Phi(\Delta(\tilde{\tau}))-\Phi(\Delta(\tau)) &\geq (- 1 - \frac{3 \, \varepsilon}{\omega(\Delta(\tau))})(\xi(\tilde{\tau}) - \xi(\tau)).
\end{align*}
Adding $2 \, \xi(\tilde{\tau}) - 2\, \xi(\tau)$ to the above inequality results in
\begin{align*}
\Psi(\tilde{\tau}) - \Psi(\tau) &\geq  ( 1 - \frac{3 \, \varepsilon}{\omega(\Delta(\tau))})(\xi(\tilde{\tau}) - \xi(\tau)), \\[4pt] 
\frac{\Psi(\tilde{\tau}) - \Psi(\tau)}{\xi(\tilde{\tau}) - \xi(\tau)} &\geq 1 - \frac{3 \, \varepsilon}{\omega(\Delta(\tau))}.
\end{align*}
Due to \eqref{figl1} and \eqref{figl2} we have
\begin{equation} \label{done}
\frac{\Psi(\tilde{\tau}) - \Psi(\tau)}{\tilde{\tau} - \tau} \geq  0.
\end{equation}
We have demonstrated that for every $\delta \in (0, \delta_2)$ there exists $\tilde{\tau} \in (\tau, \tau + \delta)$ satisfying \eqref{done}. Hence, we finally obtain \eqref{upperder} and the proof is finished.
\end{proof}

\begin{pozn}
It is interesting to note that Theorem \ref{mainunique} makes no demand on the quality of the space or the function $t \mapsto F(x, \tau, t)$ i.e.\ the quality of a single solution. Condition $\mathbf{(U1)}$ merely implies that the distance between two solutions is lower semicontinuous from the left. Investigating uniqueness without this property might be difficult, as we could then have two solutions that are equal on a subset of the real line but not in its supremum and we do not believe there is much room for improvement left in this regard. 
\end{pozn}


\section{Existence} \label{secexist}

Due to the mentioned difficulties with a stand-alone existence theorem in more abstract spaces, we will for now focus on Theorem \ref{schwabexist} i.e.\ existence in metric spaces that are locally compact. We begin by listing additional preliminaries. 

\subsection{Regulated functions}

\begin{defi} \label{equiregdef}
A function $f \colon [a,b] \rightarrow X$ is called regulated if the limit $f(t+)$ exists for all $t \in [a,b)$ and the limit $f(s-)$ exists for all $s \in (a,b]$. The set of all regulated functions from $[a,b]$ to $X$ will be denoted by $\mathcal{R}_X ([a,b])$. The set $\K \subset \mathcal{R}_X ([a,b])$ is called equiregulated if for every $\varepsilon > 0$ and $\tau \in [a,b]$ there exists $\delta > 0$ such that all $x \in \K$ satisfy
\begin{align*}
|x(t) - x(\tau -)| < \varepsilon \quad &\text{for} \quad t \in (\tau - \delta, \tau), \\
|x(t) - x(\tau +)| < \varepsilon \quad &\text{for} \quad t \in (\tau, \tau + \delta).
\end{align*}
\end{defi}

The following theorem concerns compact sets in the space of regulated functions. For $\Rn$-valued functions it is a special case of results presented by D.\ Fraňková in \cite{frankova}. Our case is significantly simplified by the assumption of compact target space. While the proof follows almost identical procedure, it will be displayed here for the convenience of the reader.

\begin{lemmz} \label{flemma1}
Assume that the set $\K \subset \mathcal{R}_X ([a,b])$ is equiregulated. Then for every $\varepsilon > 0$ there is a division $a = s_0 < s_1 < ... < s_k = b$ such that
\begin{equation} \label{franlemma1}
 |x(t) - x(s)| < \varepsilon \quad \text{for} \quad x \in \K, \, [t, s] \subset (s_{j-1}, s_j), \, j = 1,2,\ldots ,k.
\end{equation}
\end{lemmz}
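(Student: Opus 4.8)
The plan is to use a standard compactness-extraction argument of the Arzelà–Ascoli type, adapted to the equiregulated setting. The key point is that equiregulatedness gives, at each $\tau \in [a,b]$, a one-sided "uniform modulus of continuity" from the left and from the right; we exploit this locally and then patch together finitely many local intervals. Suppose for contradiction that for some $\varepsilon > 0$ no such division exists.

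First I would produce, from equiregulatedness, a good cover of $[a,b]$: for each $\tau \in [a,b]$ pick $\delta_\tau > 0$ as in Definition \ref{equiregdef} for the value $\varepsilon/3$, so that for all $x \in \K$ we have $|x(t) - x(\tau-)| < \varepsilon/3$ for $t \in (\tau - \delta_\tau, \tau)$ and $|x(t) - x(\tau+)| < \varepsilon/3$ for $t \in (\tau, \tau + \delta_\tau)$. On each of the two punctured half-intervals $(\tau - \delta_\tau, \tau)$ and $(\tau, \tau + \delta_\tau)$ the triangle inequality then gives $|x(t) - x(s)| < 2\varepsilon/3 < \varepsilon$ for all $x \in \K$ whenever $t,s$ lie in the same half-interval. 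Thus each point $\tau$ has a neighbourhood $(\tau - \delta_\tau, \tau + \delta_\tau)$ which splits into (at most) two "good" pieces on which the oscillation of every $x \in \K$ is $< \varepsilon$.

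Next I would apply compactness of $[a,b]$: finitely many intervals $(\tau_i - \delta_{\tau_i}, \tau_i + \delta_{\tau_i})$, $i = 1,\ldots,N$, cover $[a,b]$. Collect all the endpoints $\tau_i - \delta_{\tau_i}$, $\tau_i$, $\tau_i + \delta_{\tau_i}$ that fall inside $[a,b]$, together with $a$ and $b$, and sort them into a division $a = s_0 < s_1 < \cdots < s_k = b$. I would then check that this division works: given any $j$ and any $[t,s] \subset (s_{j-1}, s_j)$, the open interval $(s_{j-1}, s_j)$ contains no endpoint of the cover, so it lies entirely within one of the half-intervals $(\tau_i - \delta_{\tau_i}, \tau_i)$ or $(\tau_i, \tau_i + \delta_{\tau_i})$ for some $i$ — indeed $(s_{j-1},s_j)$ is contained in some covering interval $(\tau_i - \delta_{\tau_i}, \tau_i + \delta_{\tau_i})$, and since it avoids the point $\tau_i$ it sits in one of the two halves. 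Hence $|x(t) - x(s)| < \varepsilon$ for all $x \in \K$, which is \eqref{franlemma1}.

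The main obstacle — really the only subtlety — is the bookkeeping in the last step: one must be sure that every subinterval $(s_{j-1},s_j)$ of the refined division is genuinely contained in a \emph{single} good half-interval, which is why the points $\tau_i$ themselves (not just the $\tau_i \pm \delta_{\tau_i}$) must be included among the $s_j$; otherwise a subinterval could straddle $\tau_i$ and the one-sided estimates would not combine. Once the division includes all these break-points, the argument is routine. Note that no contradiction hypothesis is actually needed — the construction is direct — but it can equally be phrased as a contradiction if preferred; I would present it directly.
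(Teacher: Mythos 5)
Your proof is correct, but it takes a different route than the paper. The paper treats the function $\tau \mapsto \delta(\tau)$ supplied by Definition \ref{equiregdef} as a gauge and invokes Cousin's lemma (Lemma \ref{cousin}): for a $\delta$-fine partition $\{[t_{i-1},t_i],\tau_i\}_{i=1}^m$, each of $[t_{i-1},\tau_i]$ and $[\tau_i,t_i]$ automatically lies in the corresponding one-sided neighbourhood of its tag, so the division $\{t_0,\tau_1,t_1,\tau_2,\ldots,t_m\}$ works at once; the ``straddling'' issue you identify as the main obstacle never arises there, because the tags are built into the division from the start. You instead use Heine--Borel compactness: a finite subcover by intervals $(\tau_i-\delta_{\tau_i},\tau_i+\delta_{\tau_i})$, refined by inserting the centres and endpoints. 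That works, though the containment of each $(s_{j-1},s_j)$ in a \emph{single} covering interval is asserted a bit quickly; it does follow by picking any point of $(s_{j-1},s_j)$, taking a covering interval containing it, and noting that its endpoints, where they lie in $[a,b]$, are division points and hence cannot fall inside $(s_{j-1},s_j)$. Your constant bookkeeping (using $\varepsilon/3$ so the triangle inequality through the one-sided limits gives $2\varepsilon/3<\varepsilon$) is careful and correct. In comparison, the gauge/Cousin argument is shorter and consistent with the machinery used throughout the paper, while your open-cover argument is more elementary in that it does not presuppose Cousin's lemma, at the price of the extra endpoint bookkeeping; also, the contradiction hypothesis at the start of your write-up is never used and, as you note, should simply be dropped.
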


\begin{proof}
For every $t \in [a,b]$ we can find $\delta(t) > 0$ corresponding to $\K$ and $\varepsilon$ by Definition \ref{equiregdef}. By Lemma \ref{cousin} there exists a $\delta$-fine partition $\{[t_{i-1}, t_i], \tau_i\}_{i=1}^m$ of $[a, b]$. Set
\begin{equation*}
\{s_j\}_{j=1}^{2m + 1} = \{t_0, \tau_1, t_1, \tau_2, \ldots , t_m\}.
\end{equation*}
If two or three consecutive points are identical, we remove all but the first. The resulting division obviously satisfies the desired condition.
\end{proof}

\begin{veta} \label{franthm}
Let $X$ be a compact metric space and $\K \subset \mathcal{R}_X ([a,b])$. If $\K$ is equiregulated, then it is relatively compact in the topology of uniform convergence.
\end{veta}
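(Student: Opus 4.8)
The plan is to prove the stronger assertion that $\K$ is totally bounded for the uniform metric $d_\infty(x,y)=\sup_{t\in[a,b]}|x(t)-y(t)|$. Since $X$ is compact it is complete, so the space of all functions $[a,b]\to X$ equipped with $d_\infty$ is complete: a $d_\infty$-Cauchy sequence is pointwise Cauchy, hence pointwise convergent by completeness of $X$, and the convergence is then automatically uniform. (If one wants the limit to stay inside $\mathcal{R}_X([a,b])$, one also checks the routine fact that a uniform limit of regulated functions is regulated.) In a complete metric space relative compactness is equivalent to total boundedness, so it will suffice to cover $\K$, for each $\varepsilon>0$, by finitely many sets whose $d_\infty$-diameter is bounded by a fixed multiple of $\varepsilon$.

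Fix $\varepsilon>0$. First I would apply Lemma \ref{flemma1} to the equiregulated family $\K$ to obtain a division $a=s_0<s_1<\dots<s_k=b$ with $|x(t)-x(s)|<\varepsilon$ whenever $x\in\K$ and $[t,s]\subset(s_{j-1},s_j)$. Because the division is strictly increasing, each open interval $(s_{j-1},s_j)$ is nonempty, so I can choose an interior sample point $r_j\in(s_{j-1},s_j)$ for $j=1,\dots,k$ and form the finite set $P=\{s_0,\dots,s_k\}\cup\{r_1,\dots,r_k\}$. Next I would look at the evaluation map $x\mapsto(x(p))_{p\in P}\in X^{P}$; the finite power $X^{P}$ is compact, hence covered by finitely many balls $B_1,\dots,B_M$ of radius $\varepsilon$ in the max-metric, and pulling these back yields a finite cover $\K=\bigcup_{i=1}^{M}\K_i$ with $\K_i=\{x\in\K: (x(p))_{p\in P}\in B_i\}$. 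For $x,y$ in the same $\K_i$ one has $|x(p)-y(p)|<2\varepsilon$ for every $p\in P$; and for an arbitrary $t\in[a,b]$ either $t=s_j$ for some $j$ (so $|x(t)-y(t)|<2\varepsilon$), or $t\in(s_{j-1},s_j)$ for some $j$, in which case $[t,r_j]\subset(s_{j-1},s_j)$ gives $|x(t)-y(t)|\leq|x(t)-x(r_j)|+|x(r_j)-y(r_j)|+|y(r_j)-y(t)|<\varepsilon+2\varepsilon+\varepsilon=4\varepsilon$. Hence each $\K_i$ has $d_\infty$-diameter at most $4\varepsilon$, $\K$ is totally bounded, and the relative compactness follows.

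I do not expect a serious obstacle once Lemma \ref{flemma1} is available; the one delicate point is the bookkeeping at the endpoints of the subintervals. The oscillation estimate of Lemma \ref{flemma1} is only valid on the \emph{open} intervals $(s_{j-1},s_j)$, so the endpoints $s_j$ must be inserted into the finite sampling set $P$ explicitly, together with the interior points $r_j$ that are needed to actually compare values inside each subinterval. The other step requiring (mild) care is the reduction itself: it is the compactness of $X$ — not merely separability or total boundedness — that makes the ambient function space complete and so lets total boundedness of $\K$ upgrade to relative compactness in the topology of uniform convergence.
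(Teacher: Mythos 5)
Your proof is correct and follows essentially the same route as the paper: both reduce the statement to total boundedness in the (complete) space of regulated functions and obtain it from the division supplied by Lemma \ref{flemma1} together with compactness of $X$. The only cosmetic difference is that the paper exhibits an explicit finite $\varepsilon$-net of step functions with values in a finite net of $X$, whereas you cover $\K$ by finitely many sets of small uniform diameter by pulling back a finite cover of $X^{P}$ under evaluation at finitely many points.
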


\begin{proof}
Showing that $\mathcal{R}_X ([a,b])$ is complete is a standard exercise. Our goal is to show that $\K$ is totally bounded.\par
Fix $\varepsilon > 0$. By Lemma \ref{flemma1} there exists a division $a = s_0 < s_1 < ... < s_k = b$ such that \eqref{franlemma1} holds with $\varepsilon / 2$ rather than $\varepsilon$. Let $\{\alpha_1, \ldots , \alpha_m \} \subset X$ be a finite $(\varepsilon/2)$-net of $X$ i.e.
\begin{equation*}
X \subset \bigcup_{i = 1}^m \B(\alpha_i, \frac{\varepsilon}{2}).  
\end{equation*}
Define $F$ as the set of all functions $x \colon [a,b] \rightarrow X$ such that $x$ is constant on $(s_{j - 1}, s_j)$ for every $j = 1, \ldots , k$ and $x(t) \in \{\alpha_1, \ldots , \alpha_m \}$ for every $t \in [a,b]$. This set is evidently finite and consists of regulated functions. We can easily verify that $F$ is an $\varepsilon$-net of $\K$.
\end{proof}

\begin{pozn} \label{equimodulus}
A common sufficient (in some cases also necessary) condition for a set of functions $\K$ to be equiregulated is the existence of an increasing function $h \colon [a,b] \rightarrow \R$ and a modulus function $\zeta$ such that for every $x \in \K$ and $t,s \in [a,b]$ we have
\begin{equation*}
|x(t) - x(s)| \leq \zeta ( |h(t) - h(s)| ).
\end{equation*}
Indeed, for every $\varepsilon > 0$ and $\tau \in [a,b)$ we can choose $\delta > 0$ such that
\begin{equation*}
|h(\tau+) - h(t)| < \zeta^{-1}( \varepsilon ) 
\end{equation*}
for $t \in (\tau, \tau + \delta)$, which obviously results in
\begin{equation*}
|x(\tau+) - x(t)| < \varepsilon.
\end{equation*}

\end{pozn}

\subsection{Main existence theorem}

\begin{veta} \label{mainexist}
Let $X$ be a locally compact metric space, let $\Omega \subset X$ be open, let $F \colon \Omega \times [a,b] \times [a,b] \rightarrow X$ satisfy $F(x, \tau, \tau) = x$, let $\tilde{x} \in \Omega$ satisfy $F(\tilde{x}, a, a+) \in \Omega$ and let us assume the following: 
\begin{enumerate}[label=$\mathbf{(E \arabic*)}$, noitemsep]
\item For every $\tau, t \in [a,b]$ the function $x \mapsto F(x, \tau, t)$ is continuous. 
\item There exists an increasing function $h \colon [a,b] \rightarrow \R$ and a modulus function $\zeta$ such that for every $x \in \Omega$ and every $\tau, t, s \in [a,b]$ we have
\begin{equation*}
|F(x, \tau, t) - F(x, \tau, s)| \leq \zeta ( |h(t) - h(s)| ).
\end{equation*} 
\item There exists a neighbourhood $\U \subset \Omega$ of $F(\tilde{x}, a, a+)$ such that for every $\tau \in (a,b]$ there exists $\mu_{\tau} > 0$ such that for every $x \in \U$ and $t \in (\tau - \mu_{\tau}, \tau)$ there exists $y \in \Omega$ with $F(y, \tau, t) = x$. 
\item There exists an increasing function $\xi \colon [a,b] \rightarrow \R$ and a modulus function $\omega \colon \R^+_0 \rightarrow \R^+_0$ such that for $\tau, \sigma, t, s \in [a,b]$ and $x, y \in \Omega$ we have
\begin{align*}
| F(x, \tau, t) - F(y, \sigma, t) | &\leq | F(x, \tau, s) - F(y, \sigma, s) | \\
&+ \omega (|x - y| + |\tau - \sigma|) \, |\xi(t) - \xi(s)|.
\end{align*}
\end{enumerate}
Then there exists $\Delta > 0$ and a solution $u \colon [a, a + \Delta] \rightarrow X$ of $\dot{x} = \D_t \, F(x, \tau, t)$ on $[a, a + \Delta]$ with $u(a) = \tilde{x}$.
\end{veta}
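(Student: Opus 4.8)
The plan is to build the solution by a standard compactness-and-limit argument adapted to the metric-space/SHK setting. Fix a small closed interval $[a, a+\Delta]$ on which $h$, $\xi$ vary by at most a controlled amount and on which condition $\mathbf{(E3)}$ can be applied along the whole interval; shrink $\Delta$ further so that the relevant values stay inside the neighbourhood $\U$ and inside $\Omega$ (using that $\Omega$ is open, $\mathbf{(E2)}$ gives equicontinuity of $t \mapsto F(x,a,t)$ near $t=a$, and $F(\tilde x, a, a+) \in \Omega$). For each $n$ I would construct an approximate solution $u_n$ on $[a, a+\Delta]$ by a Euler-type scheme: partition $[a,a+\Delta]$ into $n$ pieces, set $u_n(a) = \tilde x$, and on the $j$-th step declare $u_n$ to follow the tangent curve $t \mapsto F(u_n(\sigma_{j-1}), \sigma_{j-1}, t)$ issuing from the left endpoint. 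The normalization $F(x,\tau,\tau)=x$ makes these pieces match up at the nodes. Condition $\mathbf{(E2)}$ then gives the uniform bound $|u_n(t) - u_n(s)| \le \zeta(|h(t)-h(s)| + \text{(small node correction)})$, so by Remark \ref{equimodulus} the family $\{u_n\}$ is equiregulated and takes values in a fixed compact subset of $X$ (here local compactness is used, choosing $\Delta$ small enough that the values stay in a compact ball around $\tilde x$); hence by Theorem \ref{franthm} a subsequence converges uniformly to some $u$.

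**Passing to the limit.** The next step is to show the uniform limit $u$ actually solves $\dot x = \D_t F(x,\tau,t)$ in the sense of Definition \ref{mainmetricsol}, i.e.\ that constant zero is an indefinite SHK integral of $U(\tau,t) = |u(t) - F(u(\tau),\tau,t)|$. Given $\varepsilon>0$, I would produce a gauge $\delta$ using $\mathbf{(E4)}$: the key point is that $\mathbf{(E4)}$ with $\tau=\sigma$ says $t \mapsto F(x,\tau,t)$ and $t \mapsto F(y,\tau,t)$ differ in a $\xi$-Lipschitz way modulated by $\omega(|x-y|)$, and with $x\ne y$, $\tau\ne\sigma$ it controls how the tangent curve from one base point compares to that from a nearby one. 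For a $\delta$-fine partition $\{[t_{i-1},t_i],\tau_i\}$, on each subinterval $u$ is uniformly close to $u_n$, which in turn is within $\zeta$-small distance of a concatenation of exact tangent curves; comparing the tangent curve of $u_n$ from its node to the tangent curve of $u$ from $\tau_i$ via $\mathbf{(E4)}$ and the continuity $\mathbf{(E1)}$, and summing the $\omega(\cdots)(\xi(t_i)-\xi(t_{i-1}))$ terms telescopically against $\xi(a+\Delta)-\xi(a)$, yields $\sum_i |u(t_i) - F(u(\tau_i),\tau_i,t_i)|$ small, which by Lemma \ref{metricequiv1} is exactly what is needed. One must also check $u(\tau)\in\Omega$ for all $\tau$, which follows from the a priori location estimates and $\Omega$ being open together with the choice of $\Delta$.

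**Role of $\mathbf{(E3)}$ and the main obstacle.** Condition $\mathbf{(E3)}$ — surjectivity of $x \mapsto F(\cdot,\tau,t)$ onto a neighbourhood for $t$ slightly less than $\tau$ — is what makes the Euler scheme well-posed: when we start the $j$-th tangent curve at the node $\sigma_{j-1}$ and evaluate it at $t>\sigma_{j-1}$, we need the value to lie in $\Omega$ so the next step is legal, but more essentially, to run the construction with the nodes as the \emph{tags of subintervals to the right} one needs that the curve emanating from a base point actually sweeps through a full neighbourhood; dually, $\mathbf{(E3)}$ guarantees the approximations don't get pinned to a boundary. I expect the main obstacle to be precisely the interplay between $\mathbf{(E3)}$ and staying inside $\Omega$: one has to choose $\Delta$, the compact ball, and the neighbourhood $\U$ in the right order so that (a) all $u_n$ stay in a fixed compact subset of $\Omega$, (b) $\mathbf{(E3)}$ is applicable uniformly over the nodes, and (c) the limiting estimates in $\mathbf{(E4)}$ close up. A secondary delicate point is that $\omega$ here is only a modulus function (not Osgood type and not linear), so no contraction is available for the limit argument — convergence must come entirely from compactness, and the estimate in the second paragraph must be arranged so that the $\omega$-terms are summed against the \emph{fixed} total variation of $\xi$ rather than bootstrapped, which is why the telescoping structure of $\mathbf{(E4)}$ (an inequality comparing values at $t$ and $s$, not an absolute bound) is essential.
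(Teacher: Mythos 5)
Your overall architecture (approximate solutions from partitions, equiregulatedness, compactness via Theorem \ref{franthm}, passage to the limit through Lemma \ref{metricequiv1}) matches the paper's, but the construction of the approximate solutions is wrong in a way that matters, and it shows in your misreading of $\mathbf{(E3)}$. You build $u_n$ by a left-endpoint Euler scheme: on $[\sigma_{j-1},\sigma_j]$ you follow $t\mapsto F(u_n(\sigma_{j-1}),\sigma_{j-1},t)$, so that normalization makes the pieces match at the nodes and $\mathbf{(E3)}$ appears superfluous (your attempted explanation of its role --- keeping values in $\Omega$, ``not getting pinned to a boundary'' --- is not what it does; containment in $\Omega$ is handled by the ball estimates from $\mathbf{(E2)}$ and $\mathbf{(E4)}$). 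The paper instead bases the $j$-th piece at the \emph{tag} $\tau_j\in[t_{j-1},t_j]$ and uses $\mathbf{(E3)}$ to solve $F(y,\tau_j,t_{j-1})=v(t_{j-1})$ \emph{backwards} for $y$, so that the piece is an exact tangent curve through the tag that still passes through the previously constructed value at the left node. This is not a cosmetic difference. The solution concept (via Lemma \ref{metricequiv1}) demands control of the backward terms $|u(t_{i-1})-F(u(\tau_i),\tau_i,t_{i-1})|$ with $t_{i-1}<\tau_i$, and at a point $\varkappa$ where $h$ has a left jump a genuine solution must satisfy $u(\varkappa-)=F(u(\varkappa),\varkappa,\varkappa-)$, i.e.\ its left jump is dictated by the tangent curve based \emph{at} $\varkappa$. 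Your scheme instead produces the left jump of the tangent curve based at the preceding node; by $\mathbf{(E4)}$ the discrepancy is of order $\omega(|u_n(\sigma_{j-1})-u(\varkappa)|+|\sigma_{j-1}-\varkappa|)\,|\xi(\varkappa)-\xi(\varkappa-)|$, and since $u_n(\sigma_{j-1})\to u(\varkappa-)\neq u(\varkappa)$ this does \emph{not} vanish under refinement. Since the entire point of the theorem is to admit regulated (jumpy) tangent curves, this is where your argument breaks; the paper's Step 2 forces every ``large jump'' point to be a tag of all sufficiently fine partitions precisely so that the approximations follow an exact tangent curve through such points on both sides.

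Two smaller points. First, equiregulatedness does not follow from $\mathbf{(E2)}$ alone as you claim: within one piece you get $|u_n(t)-u_n(s)|\leq\zeta(|h(t)-h(s)|)$, but across nodes these bounds chain additively and $\zeta$ is not subadditive, so Remark \ref{equimodulus} does not apply directly; the paper's Step 3 needs the inductive $\mathbf{(E4)}$ estimate $|u_k(t)-F(u_k(\varkappa),\varkappa,t)|\leq K\,|\xi(t)-\xi(\cdot)|$ together with the separate treatment of large and small jumps. Second, your telescoping of the $\omega(\cdots)(\xi(t_i)-\xi(t_{i-1}))$ terms against $\xi(b)-\xi(a)$ is the right idea for the ``good'' (small-jump) tags, but the ``bad'' (large-jump) tags must be handled separately by a counting argument, since near them the $\omega$-argument is not small.
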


\begin{proof}

\textbf{Step 1}: Method of construction. \\[6pt]
First, we set
\begin{equation*}
x_1 = \lim_{t \to a_+} F(\tilde{x}, a, t).
\end{equation*}
Find $\lambda > 0$ such that $\B(x_1, \lambda)$ is compact and contained in $\U$. Set
\begin{equation} \label{definek}
K = \omega(3 \, \lambda + |\tilde{x} - x_1|). 
\end{equation}
Find $\Delta \in (0, \lambda)$ such that 
\begin{equation} \label{solcontain}
\zeta (|h(a+) - h(t)|) < \frac{\lambda}{10} \quad \mathrm{for} \quad t \in (a, a + \Delta].
\end{equation}
Furthermore, let
\begin{align} \label{vyberk}
|\xi(a+) - \xi(t)| < \frac{\lambda}{ 2 \, K } \quad \mathrm{for} \quad t \in (a, a + \Delta].
\end{align}
To simplify notation we will assume that $b = a + \Delta$. \par
Let $\delta \colon [a,b] \rightarrow \R^+$ be a positive function satisfying $\delta(t) \leq \mu_t$ for $t \in [a,b]$ ($\mu_0 = \Delta$). Find a $\delta$-fine partition $\A = \{[t_{j-1}, t_j], \tau_j\}_{j=1}^n$ of $[a,b]$ such that $a$ is a tag. We will construct an approximate solution $v \colon [a,b] \rightarrow X$ corresponding to $\A$. For $t \in [a, t_1]$ set $v(t) = F(\tilde{x}, a, t)$. Due to \eqref{solcontain} we have 
\begin{equation} \label{ocascontain}
|x_1 - F(\tilde{x}, a, t)| \leq \zeta( |h(a+) - h(t)| ) < \frac{\lambda}{ 10 } \quad \mathrm{for} \quad t \in (a, b].
\end{equation}
This implies that for $t_1$ in particular, we get $|x_1 - v(t_1)| \leq \lambda$. Therefore, condition $\mathbf{(E3)}$ implies that there exists $y \in \Omega$ such that $F(y, \tau_2 , t_1) = v(t_1)$. For $t \in [t_1, t_2]$ set $v(t) = F(y, \tau_2, t)$. Particularly $v(\tau_2) = y$. We can now use
\begin{align} \label{trefa}
F(\tilde{x}, a, t_1) = v(t_1) = F(v(\tau_2), \tau_2, t_1)
\end{align} 
in combination with \eqref{solcontain} to obtain
\begin{align*}
|\tilde{x} - v(\tau_2)| &\leq |\tilde{x} - x_1| + |x_1 - v(t_1)| + |v(t_1) - v(\tau_2)| \\
&= |\tilde{x} - x_1| + |F(\tilde{x}, a, a+) - F(\tilde{x}, a, t_1)| \\
&+ |F(v(\tau_2), \tau_2, t_1) - F(v(\tau_2), \tau_2, \tau_2)| \\
&\leq |\tilde{x} - x_1| + \zeta (|h(a+) - h(t_1)|) + \zeta (|h(t_1) - h(\tau_2)|) \\
&< |\tilde{x} - x_1| + \frac{2 \, \lambda}{10}.
\end{align*}
Note that parts of the previous estimate also imply $v(\tau_2) \in \U(x_1, \lambda) \subset \Omega$. Since $\omega$ is increasing, we have
\begin{align*}
\omega ( |\tilde{x} - v(\tau_2)| + |a - \tau_2| ) &< \omega(|\tilde{x} - v(\tau_2)| + \lambda) \\
&< \omega(|\tilde{x} - x_1| + \frac{2 \, \lambda}{10} + \lambda) \\
&< \omega(|\tilde{x} - x_1| + 3 \, \lambda) = K.
\end{align*}
Condition $\mathbf{(E4)}$ applied to $a, \tau_2, t,  t_1$ and $\tilde{x}, v(\tau_2)$ together with \eqref{trefa} implies that for $t \in [t_1, t_2]$ we have
\begin{equation} \label{ipcontain}
|F(\tilde{x}, a, t) - v(t)| \leq K \, |\xi(t_1) - \xi(t)|.
\end{equation}
Fix $j \leq n$ and assume that for $t \in [a, t_{j-1}]$ we have constructed $v(t)$ with \eqref{ipcontain}. We will find such $v(t)$ for $t \in [t_{j-1}, t_j]$. \par
By combining \eqref{vyberk}, \eqref{ocascontain} and \eqref{ipcontain} we get 
\begin{align} \label{lambdaodhad}
\begin{split}
|x_1 - v(t)| &\leq |x_1 - F(\tilde{x}, a, t)| + |F(\tilde{x}, a, t) - v(t)| \\
&\leq |x_1 - F(\tilde{x}, a, t)| + K \, |\xi(t_1) - \xi(t)|  \\
&< \frac{\lambda}{10} + \frac{\lambda}{2} < \lambda \quad \mathrm{for} \quad t \in (a, t_{j-1}].
\end{split}
\end{align}
Particularly $|x_1 - v(t_{j-1})| < \lambda$. Thus, condition $\mathbf{(E3)}$ allows us to find $y \in \Omega$ such that $F(y, \tau_j , t_{j-1}) = v(t_{j-1})$. Set $v(t) = F(y, \tau_j , t)$ for $t \in [t_{j-1}, t_j]$. Due to \eqref{lambdaodhad} and \eqref{solcontain} we have
\begin{align} \label{lambdawin}
\begin{split}
|x_1 - v(t)| &\leq |x_1 - v(t_{j-1})| + |v(t_{j-1}) - v(t)| \\
&< \frac{\lambda}{10} + \frac{\lambda}{2} + \zeta(|h(t_{j-1}) - h(t)|) \\
&< \lambda \quad \mathrm{for} \quad t \in [t_{j-1}, t_j].
\end{split}
\end{align}
Since $\tau_j \in [t_{j-1}, t_j]$, we obtain
\begin{equation} \label{lambdaodhad2}
\begin{split}
|\tilde{x} - v(\tau_j)| &\leq |\tilde{x} - x_1| + |x_1 - v(\tau_j)| \\
&< |\tilde{x} - x_1| + \lambda.
\end{split}
\end{equation}
For $t \in [t_{j-1}, t_j]$ we can apply condition $\mathbf{(E4)}$ to $a, \tau_j, t, t_1$ and $\tilde{x}, v(\tau_j)$. Combined with \eqref{lambdaodhad2}, \eqref{ipcontain} and \eqref{definek} we get
\begin{align*}
|F(\tilde{x}, a, t) - v(t)| &\leq |F(\tilde{x}, a, t_{j-1}) - v(t_{j-1})| \\
&+ \omega(|\tilde{x} - v(\tau_j)| + |a - \tau_j|) \, |\xi(t_{j-1}) - \xi(t)| \\
&\leq K \, |\xi(t_1) - \xi(t_{j-1})| + K \, |\xi(t_{j-1}) - \xi(t)| \\
&= K \, |\xi(t_1) - \xi(t)| \quad \mathrm{for} \quad t \in [t_{j-1}, t_j].
\end{align*}
This finishes the construction. From \eqref{lambdawin} we learn that the resulting approximate solution $v \colon [a,b] \rightarrow X$ satisfies
\begin{equation} \label{ballcontain}
|x_1 - v(t)| < \lambda \quad \mathrm{for} \quad t \in (a,b].
\end{equation}
\textbf{Step 2}: Approximate solutions. \\[6pt]
Set $\varepsilon_k = 2^{-k}$ and find $\eta_k > 0$ such that for $|x - y| < \eta_k$ and $|\tau - \sigma | < \eta_k$ we have
\begin{equation} \label{etakchoice}
\omega (|x - y| + |\sigma - \tau|) < \varepsilon_k.
\end{equation}
For $w \in [a,b]$ set 
\begin{equation*}
J(\tau) = \max \, \{ \zeta ( |h(\tau) - h(\tau -)| ), \zeta ( |h(\tau) - h(\tau +)| ) \}.
\end{equation*}
Set
\begin{equation*} 
\psi_k = \min \, \Big\{ \frac{\varepsilon_k}{6}, \frac{\eta_k}{6} \Big\}.
\end{equation*}
For every $\tau \in [a,b]$ use $\mathbf{(E2)}$ to find $\delta_k(\tau) > 0$ such that
\begin{align}  \label{e4prep1}
\begin{split}
\zeta ( |h(t) - h(\tau -)| ) &< \frac{\psi_k}{2} \quad \mathrm{for} \quad t \in (\tau - \delta_k(\tau), \tau), \\
\zeta ( |h(s) - h(\tau +)| ) &< \frac{\psi_k}{2} \quad \mathrm{for} \quad s \in (\tau, \tau + \delta_k(\tau)).
\end{split}
\end{align}
We also arrange that for every $\tau \in [a,b]$ we have
\begin{align}
\delta_k (\tau) &< \frac{\eta_k}{2}, \label{deltaeta} \\
\delta_k (\tau) &\leq \delta_{k-1} (\tau), \label{deltadelta} \\
\delta_k (\tau) &\leq \mu_{\tau}. \notag
\end{align}
For each $k \in \N$ find a $\delta_k$-fine partition $\A_k$ of $[a,b]$ (with $a$ as a tag) and use it to construct an approximate solution $u_k \colon [a, b] \rightarrow X$ according to Step 1. \par
From \eqref{deltadelta} we learn that for $k, k_0 \in \N$ with $k \geq k_0$ the partition $\A_k$ is $\delta_{k_0}$-fine. This implies that if $J(\tau) \geq \psi_{k_0}$, then $\tau$ is a tag of $\A_k$ for all $k \geq k_0$, since assuming otherwise would contradict \eqref{e4prep1}. We can even say that if $\tau \in (a,b)$ and $J(\tau) \geq \psi_{k_0}$ then there exists $\mu > 0$ such that
\begin{equation*}
u_k(t) = F(u_k(\tau), \tau, t) \quad \mathrm{for} \quad t \in [\tau - \mu, \tau + \mu]
\end{equation*}
i.e.\ there either exists an index $i$ such that $\tau = \tau_i$ and $t_{i - 1} < \tau_i < t_i$ or there exists an index $j$ such that $\tau = \tau_j = t_j = \tau_{j+1}$. \par 
Finally, note that while we could outright demand $\delta_k \rightarrow 0$ pointwise, it is already contained as a consequence of \eqref{deltaeta} and \eqref{etakchoice}.
\\[6pt]
\textbf{Step 3}: Find a solution candidate. \\[6pt]
We will prove that $\{u_k\}_{k \in \N}$ is equiregulated. Fix $\varepsilon > 0$ and $\varkappa \in [a,b]$. Find $k_0 \in \N$ such that $\varepsilon_{k_0} < \varepsilon$. Set $K = \omega(3 \, \lambda + |\tilde{x} - x_1|)$ and find $\delta_\varkappa > 0$ such that 
\begin{align} \label{deltakappa}
\begin{split}
K \, |\xi(t) - \xi(\varkappa -)| &< \frac{\varepsilon}{6} \quad \mathrm{for} \quad t \in (\varkappa - \delta_\varkappa, \varkappa), \\
K \, |\xi(s) - \xi(\varkappa +)| &< \frac{\varepsilon}{6} \quad \mathrm{for} \quad s \in (\varkappa, \varkappa + \delta_\varkappa).
\end{split}
\end{align}
Since $u_k$ are regulated for $k \in \N$ due to being segments of $t \mapsto F(x, \tau, t)$ and Remark \ref{equimodulus}, we can find the corresponding $\nu_k > 0$ such that 
\begin{align*}
|u_k(t) - u_k(\varkappa -)| &< \varepsilon \quad \mathrm{for} \quad t \in (\varkappa - \nu_k, \varkappa),\\
|u_k(s) - u_k(\varkappa +)| &< \varepsilon \quad \mathrm{for} \quad s \in (\varkappa, \varkappa + \nu_k).
\end{align*}
Set $\delta = \min \, \{\nu_1, \ldots , \nu_{k_0}, \delta_{k_0}(\varkappa), \delta_\varkappa \}$. Fix $k \in \N$ with $k \geq k_0$. We distinguish two cases:\\[6pt]
$\mathbf{I)}$ Let $J(\varkappa) \geq \psi_{k_0}$. This means that $\varkappa$ is a tag of $\A_k = \{[t_{i-1}, t_i], \tau_i\}_{i = 1}^m$. Particularly, if $\varkappa \neq a$ then there exists $j \leq m$ such that $\varkappa = \tau_j$ and $t_{j-1} < \tau_j$.  \par
We recall that $0 < \delta < \delta_{k_0}(\varkappa)$ and that for $t \in (\varkappa - \delta_{k_0}(\varkappa), \varkappa)$ we have from \eqref{e4prep1} the inequality 
\begin{align} \label{regocasek}
\begin{split}
| F(u_k(\varkappa), \varkappa, t) - F(u_k(\varkappa), \varkappa, \varkappa-) | &\leq \zeta ( |h(t) - h(\varkappa -)| ) \\
&< \psi_{k_0} \leq \frac{\varepsilon_{k_0}}{6} < \frac{\varepsilon}{6}.
\end{split}
\end{align}
We further divide the first case into two possibilities: \\[6pt]
$\mathbf{i)}$ Let $|t_{j-1} - \varkappa| \geq \delta$. For $t \in [t_{j-1}, \varkappa]$ we have $u_k(t) = F(u_k(\varkappa), \varkappa, t)$. Since $\A_k$ is $\delta_{k_0}$-fine, we know that $|t_{j-1} - \varkappa| < \delta_{k_0}$. Now, we are done due to \eqref{regocasek}, because $(\varkappa - \delta, \varkappa) \subset [t_{j-1}, \varkappa)$ and for $t \in [t_{j-1}, \varkappa)$ we have  
\begin{equation*}
|u_k(t) - u_k(\varkappa-)| = | F(u_k(\varkappa), \varkappa, t) - F(u_k(\varkappa), \varkappa, \varkappa-) | < \varepsilon.
\end{equation*}
$\mathbf{ii)}$ Let $|t_{j-1} - \varkappa| < \delta$. We recall that $b - a \leq \lambda$ and that by \eqref{ballcontain} we have
\begin{equation*} 
|u_k(\tau) - u_k(\sigma)| < |\tilde{x} - x_1| + \lambda \quad \mathrm{for} \quad \tau, \sigma \in [a,b].
\end{equation*}
Consequently, we get
\begin{equation*} 
\omega(|u_k(\tau) - u_k(\sigma)| + |\tau - \sigma|) < K.
\end{equation*}
Applying $\mathbf{(E4)}$ to $\tau, \sigma, t, s \in [a,b]$ and $u_k(\tau), u_k(\sigma)$ results in
\begin{align} \label{e4use3}
\begin{split}
&| F(u_k(\tau), \tau, t) - F(u_k(\sigma), \sigma, t) | \\
\leq \; &| F(u_k(\tau), \tau, s) - F(u_k(\sigma), \sigma, s) | \\
+ \; &\omega (|u_k(\tau) - u_k(\sigma)| + |\tau - \sigma|) \, |\xi(t) - \xi(s)| \\
\leq \; &| F(u_k(\tau), \tau, s) - F(u_k(\sigma), \sigma, s) | + K \, |\xi(t) - \xi(s)|.
\end{split}
\end{align}
Particularly, for $i \in \N$ with $i < j$ and $t \in [t_{i-1}, t_i]$ we have
\begin{align*}
|u_k(t) - F(u_k(\varkappa), \varkappa, t)| &= | F(u_k(\tau_i), \tau_i, t) - F(u_k(\tau_j), \tau_j, t) | \\
&\leq | F(u_k(\tau_i), \tau_i, t_i) - F(u_k(\tau_j), \tau_j, t_i) | \\
&+ K \, |\xi(t) - \xi(t_i)|  \\
&= |u_k(t_i) - F(u_k(\varkappa), \varkappa, t_i)| + K \, |\xi(t) - \xi(t_i)|.
\end{align*}
We use $u_k(t_{j-1}) = F(u_k(\varkappa), \varkappa, t_{j-1})$ and simple induction to obtain
\begin{gather*}
|u_k(t) - F(u_k(\varkappa), \varkappa, t)| \leq K \, |\xi(t) - \xi(t_{j-1})| \quad \mathrm{for} \quad t \in [a, t_{j-1}].
\end{gather*}
For $t \in (\varkappa - \delta, \varkappa)$ we combine this with \eqref{regocasek} and \eqref{deltakappa} to get
\begin{align*}
|u_k(t) - u_k(\varkappa-)| &\leq |u_k(t) - F(u_k(\varkappa), \varkappa, t)| \\
&+ | F(u_k(\varkappa), \varkappa, t) - F(u_k(\varkappa), \varkappa, \varkappa-) | \\
&< K \, |\xi(t) - \xi(t_{j-1})| + \frac{\varepsilon}{6}  \\
&< \frac{\varepsilon}{6} + \frac{\varepsilon}{6} < \varepsilon.
\end{align*}
$\mathbf{II)}$ Let $J(\varkappa) < \psi_{k_0}$. We can find $\vartheta_k \in (\varkappa - \delta, \varkappa)$ such that
\begin{equation*}
|u_k(t) - u_k(\varkappa-)| < \frac{\varepsilon}{6} \quad \mathrm{for} \quad t \in [\vartheta_k, \varkappa).
\end{equation*}
Combined with $J(\varkappa) < \psi_{k_0}$ and \eqref{regocasek} we have
\begin{align} \label{zachytka}
\begin{split}
|u_k(t) - F(u_k(\varkappa), \varkappa, t)| &\leq |u_k(t) - u_k(\varkappa-)| + |u_k(\varkappa-) - u_k(\varkappa)|\\
&+ |u_k(\varkappa) - F(u_k(\varkappa), \varkappa, \varkappa-)| \\
&+ |F(u_k(\varkappa), \varkappa, \varkappa-) - F(u_k(\varkappa), \varkappa, t)| \\
&< \frac{\varepsilon}{6} + \psi_{k_0} + \psi_{k_0} + \frac{\varepsilon}{6} \\
&< \frac{4 \, \varepsilon}{6} \quad \mathrm{for} \quad t \in [\vartheta_k, \varkappa).
\end{split}
\end{align}
We can again inductively apply \eqref{e4use3} to obtain
\begin{align} \label{inductivebound}
\begin{split}
|u_k(t) - F(u_k(\varkappa), \varkappa, t)| &\leq |u_k(\vartheta_k) - F(u_k(\varkappa), \varkappa, \vartheta_k)| \\
&+ K \, |\xi(t) - \xi(\vartheta_k)| \quad \mathrm{for} \quad t \in [a, \vartheta_k].
\end{split}
\end{align}
We now apply \eqref{inductivebound}, \eqref{zachytka} with $t = \vartheta_k$, \eqref{deltakappa} and \eqref{regocasek} to obtain
\begin{align*}
|u_k(t) - u_k(\varkappa-)| &\leq |u_k(t) - F(u_k(\varkappa), \varkappa, t)| \\
&+ |F(u_k(\varkappa), \varkappa, t) - u_k(\varkappa-)| \\
&< \frac{4 \, \varepsilon}{6} + \frac{\varepsilon}{6} + \frac{\varepsilon}{6} = \varepsilon \quad \mathrm{for} \quad t \in (\varkappa - \delta, \vartheta_k].
\end{align*}
An analogous procedure can be applied on the right side of every $\varkappa \in [a,b)$ to show that $\{u_k\}_{k \in \N}$ is indeed equiregulated. \par
Therefore, the set $\{u_k\}_{k \in \N}$ satisfies the assumptions of Theorem \ref{franthm} on the compact space $\{\tilde{x}\} \cup \B(x_1, \lambda)$. Consequently, it is relatively compact. To simplify notation, we rename the convergent subsequence to $u_k$. The limit function will be denoted as $u$. \\[6pt]
\textbf{Step 4}: Prove that $u$ solves $\dot{x} = \D_t \, F(x, \tau, t)$.\\[6pt]
Our goal is to show that for every $\varepsilon > 0$ there exists $\delta \colon [a,b] \rightarrow (0, \infty)$ such that for any $\delta$-fine partition $\A = \{[t_{i-1}, t_i], \tau_i\}_{i = 1}^m$ of $[a,b]$ we have
\begin{equation} \label{reseni}
\sum_{i=1}^m  \Big( |u(t_{i-1}) - F(u(\tau_i), \tau_i, t_{i-1} )| + |u(t_i) - F(u(\tau_i), \tau_i, t_i )| \Big) < \varepsilon.
\end{equation}
Denote the left side of \eqref{reseni} by
\begin{equation*}
\sum_\A (u, F).
\end{equation*}
First, recall that for $t \in [a,b]$ we have
\begin{equation} \label{limres}
u(t) = \lim_{k \to \infty} u_k(t).
\end{equation}
For a fixed partition $\A$ of $[a,b]$ we can combine \eqref{limres} and $\mathbf{(E1)}$ to get
\begin{align} \label{limitrightside}
\begin{split}
 F(u(\tau_i), \tau_i, t_{i-1} ) &=  \lim_{k \to \infty} F(u_k(\tau_i), \tau_i, t_{i-1} ), \\
 F(u(\tau_i), \tau_i, t_i ) &= \lim_{k \to \infty} F(u_k(\tau_i), \tau_i, t_i ).
\end{split}
\end{align}
By \eqref{limres}, \eqref{limitrightside} and the continuity of the metric we have
\begin{align*}
|u(t_{i-1}) - F(u(\tau_i), \tau_i, t_{i-1} )| &= \lim_{k \to \infty} |u_k(t_{i-1}) - F(u_k(\tau_i), \tau_i, t_{i-1} )|,   \\
|u(t_i) - F(u(\tau_i), \tau_i, t_i )| &= \lim_{k \to \infty} |u_k(t_i) - F(u_k(\tau_i), \tau_i, t_i )|.  
\end{align*}
Using the additivity of limits we obtain
\begin{equation} \label{sumconv}
\sum_\A (u, F) = \lim_{k \to \infty} \sum_\A (u_k, F).
\end{equation}
Assume that we have the following property: 
\begin{itemize}[label=$(\star)$, noitemsep]
\item For every $\varepsilon > 0$ there exists $\delta \colon [a,b] \rightarrow \R^+ $ such that for every $\delta$-fine partition $\A$ of $[a,b]$ we we can find $k_5 \in \N$ with 
\begin{equation} \label{partialsolv}
 \sum_\A (u_k, F) < \varepsilon \quad \mathrm{for} \quad k \in \N, k \geq k_5.
\end{equation}
\end{itemize}
Then for any $\delta$-fine partition $\A$ we could use \eqref{sumconv} to find $k_6 \in \N, k_6 \geq k_5$ such that
\begin{equation*}
\sum_\A (u, F) < \sum_\A (u_{k_6}, F) + \varepsilon.
\end{equation*}
Together with \eqref{partialsolv} we would obtain the desired result
\begin{equation*}
\sum_\A (u, F) < \sum_\A (u_{k_6}, F) + \varepsilon < 2\, \varepsilon.
\end{equation*}
We now finish the proof by verifying $(\star)$. Let $\varepsilon > 0$ be given. Find $k_0 \in \N$ such that
\begin{equation*}
 2^{-k_0} < \min \Big\{ \varepsilon, \frac{\varepsilon}{2 \,(\xi(b) - \xi(a))} \Big\} . 
\end{equation*}
Denote by $E(r)$ the number of $\tau \in [a,b]$ such that $J(\tau) > r$. Find $\delta(\tau) < \delta_{k_0} (\tau)$ such that for all $t \in (\tau - \delta(\tau), \tau)$, $s \in (\tau, \tau + \delta(\tau))$ and $k \in \N$ we have
\begin{align} \label{e4prep2}
\begin{split}
K \, |\xi(t) - \xi(\tau -)| &< \frac{\varepsilon} {4 E(\psi_{k_0} )},  \\
K \, |\xi(s) - \xi(\tau +)| &< \frac{\varepsilon} {4 E(\psi_{k_0} )},   \\
|u_k(t) - u_k(\tau -)| &< \frac{\eta_{k_0}}{6}, \\
|u_k(s) - u_k(\tau +)| &< \frac{\eta_{k_0}}{6}. 
\end{split}
\end{align}
Let $\A = \{[t_{i-1}, t_i], \tau_i\}_{i = 1}^m$ be an arbitrary $\delta$-fine partition of $[a,b]$. By $\delta < \delta_{k_0}$ it is also $\delta_{k_0}$-fine. Therefore, we can once again use \eqref{e4prep1} to say that for all $\tau \in [a, b]$ with $J(\tau) \geq \psi_{k_0}$ we have $\tau \in \A$ i.e.\ there exists $i \leq m$ for which $\tau = \tau_i$. For all such $\tau \in (a,b)$ we eliminate the possibility $t_i = \tau_i = \tau = \tau_{i+1}$, since assuming $t_{i-1} < \tau_i = \tau < t_i$ does not change the final sum. We further recall that $\delta_k \rightarrow 0$ pointwise. Therefore, we can find $k_1 \geq k_0$ such that all $\tau_i \in \A$ with $J(\tau_i) \geq \psi_{k_0}$ satisfy
\begin{equation}  \label{badjumpcluster}
\delta_{k_1} (\tau_i) < \min \{|t_{i-1} - \tau_i|, |\tau_i - t_i| \}.
\end{equation}
The special cases (if necessary) obviously reduce to
\begin{equation*}
\delta_{k_1} (a) < |a - t_1| \quad \text{and} \quad \delta_{k_1} (b) < |t_{m-1} - b|.
\end{equation*}
Fix $k \geq k_1$, let $\A_k = \{ [s_{j-1}, s_j], \sigma_j\}_{j=1}^n$. Once again $J(\tau) \geq \psi_{k_0}$ implies $\tau \in \A_k$ i.e.\ every bad jump is a tag of both $\A$ and $\A_k$. We want to show that 
\begin{equation*}
\sum_{i=1}^m \Big( |u_k(t_{i-1}) - F(u_k(\tau_i), \tau_i, t_{i-1} )| + |u_k(t_i)  - F(u_k(\tau_i), \tau_i, t_i )| \Big) < \varepsilon.
\end{equation*}
Similarly to the previous step, we consider small and large jumps separately: \\[6pt]
$\mathbf{A)}$ Let $J(\tau_v) \geq \psi_{k_0}$ and let $w \leq n$ be such that $\tau_v = \sigma_w$. For $\tau_v \neq a$ we can write
\begin{equation*}
[t_{v - 1}, \tau_v] = \bigcup_{j = 1}^{m} \; [t_{v - 1}, \tau_v ] \cap [s_{j-1}, s_j].
\end{equation*}
Set $j_v = \min \{ j \leq n \cara (t_{v-1}, \tau_v) \cap (s_{j-1}, s_j) \neq \varnothing \}$. From \eqref{badjumpcluster} we get $j_v < w$. Thus, we can write
\begin{align*}
&|u_k(t_{v-1}) - F(u_k(\tau_v), \tau_v, t_{v-1} )| \\
= \; &|u_k(t_{v-1}) - F(u_k(\tau_v), \tau_v, t_{v-1} )| - |u_k(s_{j_v}) - F(u_k(\tau_v), \tau_v, s_{j_v} )| \\
+ \; &|u_k(s_{j_v}) - F(u_k(\tau_v), \tau_v, s_{j_v} )| - |u_k(s_{j_v + 1}) - F(u_k(\tau_v), \tau_v, s_{j_v + 1} )| \\
&\phantom{|u_k(s_{j_S (i)}) - F(u_k(\tau_i), \tau_i, s_{j_S (i)} )|} \vdots \\
+ \; &|u_k(s_{w - 2}) - F(u_k(\tau_v), \tau_v, s_{w - 2} )| - |u_k(s_{w - 1}) - F(u_k(\tau_v), \tau_v, s_{w - 1} )| \\
+ \; &|u_k(s_{w - 1}) - F(u_k(\tau_v), \tau_v, s_{w - 1} )|.
\end{align*}
Since $u_k(s_{w - 1}) = F(u_k(\tau_v), \tau_v, s_{w - 1} )$ we have
\begin{equation*}
 |u_k(s_{w - 1}) - F(u_k(\tau_v), \tau_v, s_{w - 1} )| = 0.
\end{equation*}
Once again, we inductively apply \eqref{e4use3} to obtain
\begin{align*}
|u_k(t_{v-1}) - F(u_k(\tau_v), \tau_v, t_{v-1} )| &\leq K \, |h(t_{v - 1}) - h(s_{j_v})| \\
&+ K \, |h(s_{j_v}) - h(s_{j_v + 1})| \\
&\quad \quad \quad \quad \quad \quad \vdots \\
&+ K \, |h(s_{w - 2}) - h(s_{w - 1})| \\
&= K \, |h(t_{v - 1}) - h(s_{w - 1})| \\
&< K \, |h(t_{v - 1}) - h(\tau_v -)|  \\
&< \frac{\varepsilon} {4 E(\psi_{k_0})}.
\end{align*}
Using the identical estimate on the right side, we conclude that the total damage of bad jumps to the final sum is less than $\varepsilon / 2$. \\[6pt]
$\mathbf{B)}$ Now let $\tau_v \in \A$ be such that $J(\tau_v) < \psi_{k_0}$. Fix $\sigma_z \in \A_k$ such that $(t_{v - 1}, t_v) \cap (s_{z-1}, s_z) \neq \varnothing$. We observe that $J(\sigma_z) < \psi_{k_0}$. If this was not the case, there would exist $\tau_q \in \A$ such that $\tau_q = \sigma_z$ and by \eqref{badjumpcluster} we would have $[s_{z - 1}, s_z] \subset (t_{q-1}, t_q)$, causing a contradiction. Additionally, we assume that $\sigma_z > \tau_v$ to avoid technical complications, since the opposite case is a direct analogy.  \par
Choose $\varkappa \in (\tau_v, t_v) \cap (s_{z-1}, \sigma_z)$ if both intervals are nonempty, $\varkappa = \tau_v$ if $\tau_v = t_v$ and $\varkappa = \sigma_z$ if $s_{z-1} = \sigma_z$. If both equalities hold we obtain a contradiction. Due to \eqref{deltaeta} we have
\begin{align*}
|\tau_v - \sigma_z| &\leq |\tau_v - \varkappa| + |\varkappa - \sigma_z |  \\
&< \delta(\tau_v) + \delta_k (\sigma_z) < 2 \, \delta_{k_0} < \eta_{k_0}.
\end{align*}
By \eqref{e4prep1} and \eqref{e4prep2} we get
\begin{align*}
|u_k(\tau_v) - u_k(\sigma_z)| &\leq |u_k(\tau_v) - u_k(\varkappa)| + |u_k(\varkappa) - u_k(\sigma_z) |  \\
&\leq |u_k(\tau_v) - u_k(\tau_v +)| + |u_k(\tau_v +) - u_k(\varkappa)| \\
&+ |u_k(\varkappa) - u_k(\sigma_z -)| + |u_k(\sigma_z -) - u_k(\sigma_z)| \\
&< \psi_{k_0} + \frac{\eta_{k_0}}{6} + \psi_{k_0} + \psi_{k_0} < \eta_{k_0}.
\end{align*}
Here, note that the difference between \eqref{e4prep1} and \eqref{e4prep2} is that $u_k$ consists of only one segment of $F$ on $[ s_{z-1}, s_z ]$ but possibly of multiple segments on $[t_{v - 1}, t_v]$. \par
For $t, s \in [t_{v - 1}, t_v] \cap [s_{z - 1}, s_z]$ we can now apply $\mathbf{(E4)}$ to $\tau_v$, $\sigma_z$, $t$, $s$ and $u_k(\tau_v)$, $u_k(\sigma_z)$ to obtain
\begin{align} \label{finale4}
\begin{split}
| F(u_k(\tau_v), \tau_v, t) - u_k(t) | &\leq | F(u_k(\tau_v), \tau_v, s) - u_k(s) | \\
&+ \varepsilon_{k_0} \, |\xi(t) - \xi(s)|.
\end{split}
\end{align}
We can once again write
\begin{equation*}
[t_{v - 1}, t_v] = \bigcup_{j = 1}^{m} \; [t_{v - 1}, t_v ] \cap [s_{j-1}, s_j]
\end{equation*}
and use the additivity of the right side of \eqref{finale4} to extend it to any $t, s \in [t_{v - 1}, t_v]$. Particularly for $t = t_{v - 1}, s = \tau_v$ and $t = t_v, s = \tau_v$ we utilize the normalization of $F$ to get
\begin{align*}
| F(u_k(\tau_v), \tau_v, t_{v - 1}) - u_k(t_{v - 1}) | &\leq \varepsilon_{k_0} \, |\xi(t_{v - 1}) - \xi(\tau_v)|, \\
| F(u_k(\tau_v), \tau_v, t_v) - u_k(t_v) | &\leq \varepsilon_{k_0} \, |\xi(\tau_v) - \xi(t_v)|.
\end{align*}
Consequently, the sum over all good tags is less than $\varepsilon_{k_0} \, ( \xi(b) - \xi(a) ) < \varepsilon/2$ and the proof is finished. 

\end{proof}

\begin{pozn}
The only purpose of condition $\mathbf{(E3)}$ is to construct solutions from Kurzweil type partitions and we believe it could eventually be eliminated by using a more sophisticated construction. 
\end{pozn}

\section{Linear case} \label{seclinear}

In this section we compare theorems presented in Sections \ref{secunique} and \ref{secexist} with standard results shown in Section \ref{secgode}. 

\subsection{Uniqueness} Here, we show that Theorem \ref{schwabunique} is contained in Theorem \ref{mainunique}, i.e.\ if the assumptions of Theorem \ref{schwabunique} are satisfied, then the assumptions of Theorem \ref{mainunique} are satisfied as well. We first take a look at how the assumptions of Theorem \ref{mainunique} can be simplified in normed linear spaces. \par
Let $E$ be a normed linear space, $\Omega \subset E$ and $F \colon \Omega \times (a,b) \times (a,b) \rightarrow E$. Set $\tilde{F}(x, \tau, t) = x + F(x, \tau, t) - F(x, \tau, \tau)$ and observe
\begin{align*}
&\hspace*{80pt} \| \tilde{F}(x, \tau, t) - \tilde{F}(y, \tau, t) \| \\
= \; &\| x + F(x, \tau, t) - F(x, \tau, \tau) - y - F(y, \tau, t) + F(y, \tau, \tau) \| \\
\geq \; &\|x - y\| - \| F(x, \tau, t) - F(x, \tau, \tau) - F(y, \tau, t) + F(y, \tau, \tau) \|. 
\end{align*}
Similarly
\begin{align*}
&\hspace*{80pt} \| \tilde{F}(x, \tau, t) - \tilde{F}(y, \tau, t) \| \\
= \; &\| x + F(x, \tau, t) - F(x, \tau, \tau) - y - F(y, \tau, t) + F(y, \tau, \tau) \| \\
\leq \; &\|x - y\| + \| F(x, \tau, t) - F(x, \tau, \tau) - F(y, \tau, t) + F(y, \tau, \tau) \|.
\end{align*}
Thus, our theorem takes the following form.
\begin{veta} \label{mainunique-lin}
Let $E$ be a normed linear space, $\Omega \subset E$ and $F \colon \Omega \times [a,b) \times [a,b) \rightarrow E$. Let us further assume the following: 
\begin{enumerate}[label=$\mathbf{( \widehat{U \arabic*})}$, noitemsep]
\item For every $x, y \in \Omega$ and $\tau \in (a,b)$ let
\begin{equation*}
\lim_{t \to \tau_-}  \| F(x, \tau, t) - F(x, \tau, \tau) - F(y, \tau, t) + F(y, \tau, \tau) \|  = 0.
\end{equation*}
\item There exists an increasing function $\xi \colon [a,b) \rightarrow \R$ and an Osgood type modulus function $\omega$ such that for every $x, y \in \Omega$ and $\tau \in (a,b)$ we have
\begin{equation*}
\liminf_{t \to \tau_+} \frac{ \| F(x, \tau, t) - F(x, \tau, \tau) - F(y, \tau, t) + F(y, \tau, \tau) \|  }{\xi(t) - \xi(\tau)} \leq \omega(\| x - y \|).
\end{equation*}
\end{enumerate}
Then every solution of $ \dot{x} = \text{D}_t F(x, \tau, t)$ on $[a,b)$ is locally unique in the future.
\end{veta}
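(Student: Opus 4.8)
The plan is to reduce Theorem~\ref{mainunique-lin} directly to Theorem~\ref{mainunique} by passing to the normalized right-hand side. First I would set $\tilde{F}(x,\tau,t) = x + F(x,\tau,t) - F(x,\tau,\tau)$, which satisfies $\tilde{F}(x,\tau,\tau) = x$, and record the two things that make the reduction legitimate: by the cancellation remark following Definition~\ref{standardgode}, the equations $\dot{x} = \D_t F(x,\tau,t)$ and $\dot{x} = \D_t \tilde{F}(x,\tau,t)$ have exactly the same solutions on $[a,b)$; and by the corollary following Lemma~\ref{metricequiv1}, for the normalized $\tilde{F}$ on the normed space $E$ the notion of solution from Definition~\ref{standardgode} coincides with the metric notion of Definition~\ref{mainmetricsol}. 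Hence it suffices to establish local uniqueness in the future for solutions of $\dot{x} = \D_t \tilde{F}(x,\tau,t)$ on $[a,b)$, viewing $E$ as a metric space under the norm distance, and for this Theorem~\ref{mainunique} applies once its hypotheses $\mathbf{(U1)}$ and $\mathbf{(U2)}$ are checked for $\tilde{F}$.

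The second step is exactly that verification, and it is where the two displayed estimates in the paragraph just before the theorem are used. From the lower estimate $\|\tilde{F}(x,\tau,t) - \tilde{F}(y,\tau,t)\| \geq \|x-y\| - \|F(x,\tau,t) - F(x,\tau,\tau) - F(y,\tau,t) + F(y,\tau,\tau)\|$, taking $\liminf$ as $t \to \tau_-$ and using $\mathbf{(\widehat{U1})}$ (the subtracted term tends to $0$) yields $\liminf_{t\to\tau_-}\big(\|\tilde{F}(x,\tau,t) - \tilde{F}(y,\tau,t)\| - \|x-y\|\big) \geq 0$, which is precisely $\mathbf{(U1)}$ for $\tilde{F}$. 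From the upper estimate $\|\tilde{F}(x,\tau,t) - \tilde{F}(y,\tau,t)\| - \|x-y\| \leq \|F(x,\tau,t) - F(x,\tau,\tau) - F(y,\tau,t) + F(y,\tau,\tau)\|$, dividing by $\xi(t)-\xi(\tau) > 0$ for $t > \tau$ and applying $\liminf$ as $t \to \tau_+$, monotonicity of $\liminf$ combined with $\mathbf{(\widehat{U2})}$ gives $\liminf_{t\to\tau_+} \frac{\|\tilde{F}(x,\tau,t) - \tilde{F}(y,\tau,t)\| - \|x-y\|}{\xi(t) - \xi(\tau)} \leq \omega(\|x-y\|)$, i.e.\ $\mathbf{(U2)}$ for $\tilde{F}$ with the same $\xi$ and the same Osgood type $\omega$.

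With both hypotheses verified, Theorem~\ref{mainunique} applies to $\tilde{F}$ and gives local uniqueness in the future, in the sense of Definition~\ref{unique-future}, of every solution of $\dot{x} = \D_t \tilde{F}(x,\tau,t)$ on $[a,b)$; transporting this back through the equivalence of solution concepts recorded in the first step finishes the proof. I do not expect any genuine obstacle here, since the entire analytic content is carried by Theorem~\ref{mainunique}; the only points requiring care are bookkeeping ones — that normalization leaves the solution set unchanged, that Definition~\ref{unique-future} is stated for the metric notion of solution and the corollary bridges it to Definition~\ref{standardgode} in the linear setting, and that the $\liminf$ manipulations only ever relax the inequalities in the direction needed by $\mathbf{(U1)}$ and $\mathbf{(U2)}$.
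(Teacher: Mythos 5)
Your proposal is correct and follows essentially the same route as the paper: normalize to $\tilde{F}(x,\tau,t)=x+F(x,\tau,t)-F(x,\tau,\tau)$, use the two triangle-inequality estimates to deduce $\mathbf{(U1)}$ and $\mathbf{(U2)}$ for $\tilde{F}$ from $\mathbf{(\widehat{U1})}$ and $\mathbf{(\widehat{U2})}$, and invoke Theorem \ref{mainunique}. Your write-up merely makes explicit the bookkeeping (invariance of the solution set under normalization and the bridge between Definition \ref{standardgode} and Definition \ref{mainmetricsol}) that the paper leaves implicit.
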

Now, let $F(x, \tau, t) = G(x, t)$ satisfy the assumptions of Theorem \ref{schwabunique} i.e.\ let it belong to $\mathcal{F}(\O, h, \omega)$ where $h$ is continuous from the left and $\omega$ is an Osgood type modulus function. Condition \eqref{calf2} directly implies $\mathbf{(\widehat{U2})}$ with the same $\omega$ and $\xi = h$. In order to verify $\mathbf{(\widehat{U1})}$, we notice that continuity of $h$ from the left implies
\begin{equation*}
\lim_{t \to \tau_-} \omega (\|x - y\|) \, |h(t) - h(\tau)| = 0.
\end{equation*}
Therefore, condition \eqref{calf2} gives us
\begin{equation*}
\lim_{t \to \tau_-} \| G(x, t) - G(x, \tau) - G(y, t) + G(y, \tau) \| = 0.
\end{equation*}
Now, let us address condition \eqref{initialjump}. Theorem \ref{schwabunique} does not assume that $x$ is a solution on the right side of $\tau$. Since Theorem \ref{schwabexist} has weaker assumptions than Theorem \ref{schwabunique}, condition \eqref{initialjump} is present to ensure continuation of the solution. However, if the solution does exist on a right neighbourhood and we have other means to keep it contained within the domain, we need not assume the existence of the limit in \eqref{initialjump}. In such case, we can completely abstain from using condition \eqref{calf1}. \par
It is also worth addressing the difference between conditions \eqref{calf2} and $\mathbf{(U2)}$. The transition to a local version is merely a specific of nonrestricted equations. However, the transition from supremum to infimum is a substantial difference.

\subsection{Existence}
Here, we show that Theorem \ref{schwabexist} is contained in Theorem \ref{mainexist}. Let $\Omega \subset \Rn$ be open and let $F \colon \Omega \times [a,b] \times [a,b] \rightarrow \Rn$ satisfy
\begin{gather} 
\| F(x, \tau, t) - F(x, \tau, s) \| < \zeta ( |h(t) - h(s)| ), \label{calf1weak} \tag{$\mathcal{F}^*_1$}\\
\| F(x, \tau, t) - F(y, \sigma, t) - F(x, \tau, s) + F(y, \sigma, s) \| \label{calf2weak}  \tag{$\mathcal{F}^*_2$}\\
\leq \omega ( \| x - y \| + |\tau - \sigma |) \, |\xi(t) - \xi(s)| \notag
\end{gather}
for all $x, y \in \Omega$ and $\tau, \sigma, t, s \in [a,b]$ where $h$ and $\xi$ are increasing real functions on $[a,b]$ and $\omega$ as well as $\zeta$ are modulus functions. Set $\tilde{F}(x, \tau, t) =  x + F(x, \tau, t) - F(x, \tau, \tau)$. \\[6pt]
\textbf{e1)} We begin by showing that $x \mapsto \tilde{F}(x, \tau, t)$ is continuous for every $\tau, t \in [a,b]$. For $t = \tau$ it is the identity mapping. For $t \neq \tau$ we have
\begin{align*}
&\|x + F(x, \tau, t) - F(x, \tau, \tau) - y - F(y, \tau, t) + F(y, \tau, \tau) \|  \\
\leq \; &\| x - y \| + \| F(x, \tau, t) - F(x, \tau, \tau) - F(y, \tau, t) + F(y, \tau, \tau) \| \\
\leq \; &\|x - y\| + \omega(\|x - y\| + |\tau - \tau|) \, |\xi(t) - \xi(\tau)|. 
\end{align*}
We finish by observing that
\begin{equation*}
\lim_{ \| x - y \| \, \to \, 0 } \|x - y\| + \omega(\|x - y\|) \, |\xi(t) - \xi(\tau)| = 0.
\end{equation*}
\textbf{e2)} Choose $x \in \U_R$ and $\tau, t, s \in [a,b]$. We have
\begin{align*}
\noalign{\centering $\|\tilde{F}(x, \tau, t) - \tilde{F}(x, \tau, s)\|$ } 
= \; &\| x + F(x, \tau, t) - F(x, \tau, \tau) - x - F(x, \tau, s) + F(x, \tau, \tau) \| \\
= \; &\| F(x, \tau, t) - F(x, \tau, s) \| \leq \zeta ( |h(t) - h(s)| ).
\end{align*}
\textbf{e3)} Fix $\hat{x} \in \U_R$ and $\hat{\tau} \in [a,b)$. There exists $\alpha > 0$ such that $\B(\hat{x}, \alpha) \subset \U_R$. We know that there exists $\beta > 0$ such that deg$(f, \U (\hat{x}, \alpha), x) = 1$ whenever $x \in \U(\hat{x}, \beta)$ and $f \colon \B(\hat{x}, \alpha) \rightarrow \Rn $ is continuous with $\|f(y) - y\| < \beta$. Due to \textbf{(e2)} we know that
\begin{equation*}
\|\tilde{F}(x, \tau, t) - \tilde{F}(x, \tau, \tau +)\| \leq \zeta ( |h(t) - h(\tau +)| ).
\end{equation*}
Find $\gamma > 0$ such that $\zeta ( |h(t) - h(\hat{\tau} +)| ) < \beta $ for $t \in (\hat{\tau}, \hat{\tau} + \gamma )$. Fix $\tau, t \in (\hat{\tau}, \hat{\tau} + \gamma )$. We recall that $\tilde{F}(x, \tau, \tau) = x$ and that $h$ is increasing and obtain
\begin{equation*}
 \|\tilde{F}(x, \tau, t) - x\| \leq \zeta ( |h(t) - h(\tau )| ) < \beta. 
\end{equation*}
In \textbf{(e1)} we have shown that $x \mapsto \tilde{F}(x, \tau, t)$ is continuous. We also arrange $\gamma < \beta$ and obtain the following property: For every $\hat{x} \in  \U_R$ and $\hat{\tau} \in [a,b)$ there exists $\gamma > 0$ such that for any $x \in \U(\hat{x}, \gamma)$ and $\tau, t \in (\hat{\tau}, \hat{\tau} + \gamma)$ there exists $y \in \U_R$ such that $\tilde{F}(y, \tau, t) = x$. \par
By using this property for $\hat{\tau} = a$ and arranging $b < a + \gamma$ we obtain $\mathbf{(E3)}$ with $\U = \U( \tilde{F}(\tilde{x},a, a+), \gamma)$ and $\mu_{\tau} = \tau - a$.\\[6pt]
\textbf{e4)} Finally, we have
\begin{align*}
\noalign{\centering $\|\tilde{F}(x, \tau , t) - \tilde{F}(y, \sigma , t)\| - \|\tilde{F}(x, \tau , s) - \tilde{F}(y, \sigma , s)\| $}
= \; &\|x + F(x, \tau, t) - F(x, \tau, \tau ) - y - F(y, \sigma, t) + F(y, \sigma, \sigma ) \|  \\
- \; &\|x + F(x, \tau, s) - F(x, \tau, \tau) - y - F(y, \sigma, s) + F(y, \sigma, \sigma ) \|  \\
\leq \; &\| F(x, \tau, t) - F(y, \sigma, t) - F(x, \tau, s) + F(y, \sigma, s) \| \\
\leq \; &\omega(\|x - y\| + |\tau - \sigma| ) \, |\xi(t) - \xi(s)|.
\end{align*}
Now, we can see that our existence theorem takes the following form in $\Rn$.
\begin{veta} \label{mainexist-lin}
Let $\Omega \subset \Rn$ be open, let $F \colon \Omega \times [a,b] \times [a,b] \rightarrow \Rn$ satisfy \eqref{calf1weak} and \eqref{calf2weak} and let $\tilde{x} \in \Omega$ satisfy
\begin{equation*}
\lim_{t \to a_+} \tilde{x} + F(\tilde{x}, a, t) - F(\tilde{x}, a, a) \in \Omega.
\end{equation*}
Then there exists $\Delta > 0$ and a solution $u \colon [a, a + \Delta] \rightarrow X$ of $\dot{x} = \D_t \, F(x, \tau, t)$ on $[a, a + \Delta]$ with $u(a) = \tilde{x}$.
\end{veta}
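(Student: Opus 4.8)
The plan is to deduce the statement directly from Theorem \ref{mainexist} applied to the normalized right-hand side $\tilde{F}(x, \tau, t) = x + F(x, \tau, t) - F(x, \tau, \tau)$, which by construction satisfies $\tilde{F}(x, \tau, \tau) = x$. First I would observe that $\Rn$ is a locally compact metric space and that $\Omega$ is open, so the ambient hypotheses of Theorem \ref{mainexist} hold. Then I would read off conditions $\mathbf{(E1)}$--$\mathbf{(E4)}$ from the four computations \textbf{e1)}--\textbf{e4)} carried out above: \textbf{e1)} gives $\mathbf{(E1)}$, \textbf{e2)} gives $\mathbf{(E2)}$ (with the modulus $\zeta$ and the increasing function $h$), \textbf{e3)} gives $\mathbf{(E3)}$ (with $\U = \U(\tilde{F}(\tilde{x}, a, a+), \gamma)$ and $\mu_\tau = \tau - a$), and \textbf{e4)} gives $\mathbf{(E4)}$ (with the modulus $\omega$ and the increasing function $\xi$). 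Finally, \textbf{e2)} guarantees that $\tilde{F}(\tilde{x}, a, a+) = \lim_{t \to a_+}\big(\tilde{x} + F(\tilde{x}, a, t) - F(\tilde{x}, a, a)\big)$ exists, and the hypothesis on $\tilde{x}$ is exactly that this limit lies in $\Omega$, which is the remaining assumption of Theorem \ref{mainexist}.

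Having checked all of this, I would invoke Theorem \ref{mainexist} with $X = \Rn$ and the function $\tilde{F}$ to obtain $\Delta > 0$ and a solution $u \colon [a, a + \Delta] \rightarrow \Rn$ of $\dot{x} = \D_t \, \tilde{F}(x, \tau, t)$ on $[a, a + \Delta]$ with $u(a) = \tilde{x}$. The last step is to transfer the conclusion back to $F$: by the corollary following Lemma \ref{metricequiv1}, $u$ is a solution of $\dot{x} = \D_t \, \tilde{F}(x, \tau, t)$ in the sense of Definition \ref{standardgode}, and by the cancellation of the terms $F(u(\tau_i), \tau_i, \tau_i)$ in the sum \eqref{sumcalcel} --- the normalization remark in Definition \ref{standardgode} --- the equations $\dot{x} = \D_t \, F(x, \tau, t)$ and $\dot{x} = \D_t \, \tilde{F}(x, \tau, t)$ have the same solutions. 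Hence $u$ is the solution claimed.

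I do not expect a real obstacle, as the theorem is essentially a reformulation of Theorem \ref{mainexist}. The only nontrivial ingredient is $\mathbf{(E3)}$, verified in \textbf{e3)}: to invert $x \mapsto \tilde{F}(x, \tau, t)$ on a small ball one uses a Brouwer degree argument, since this map is a uniformly small continuous perturbation of the identity on $\B(\hat{x}, \alpha) \subset \Omega$, hence has degree $1$ at nearby points and is therefore onto a neighbourhood; this is the step that genuinely uses openness of $\Omega$ together with the bound \eqref{calf1weak}. All other conditions follow by mechanically rewriting the estimates \eqref{calf1weak} and \eqref{calf2weak} in terms of $\tilde{F}$.
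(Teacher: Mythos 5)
Your proposal is correct and follows essentially the same route as the paper: the computations \textbf{e1)}--\textbf{e4)} are precisely the paper's verification of $\mathbf{(E1)}$--$\mathbf{(E4)}$ for the normalized $\tilde{F}$, after which Theorem \ref{mainexist} is invoked with $X = \Rn$. Your explicit transfer of the conclusion back to $F$ via the corollary to Lemma \ref{metricequiv1} and the cancellation remark in Definition \ref{standardgode} is left implicit in the paper but is exactly the intended argument.
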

Since in the restricted case \eqref{calf1} and \eqref{calf2} directly imply \eqref{calf1weak} and \eqref{calf2weak}, we come to the conclusion that Theorem \ref{schwabexist} is contained in Theorem \ref{mainexist}. To demonstrate the difference, we will consider the following existence theorem from \cite{henstocklectures} by R.~Henstock. We will show that while it is not included in Theorem \ref{schwabexist}, it does follow from Theorem \ref{mainexist}.

\begin{veta} \label{henstockexist}
Assume that $f \colon \Rn \times [a,b] \rightarrow \Rn$ satisfies the following conditions:
\begin{enumerate}[label=$\mathbf{(H \arabic*)}$, noitemsep] 
\item The function $x \mapsto f(x, t)$ is continuous for almost all $t \in [a,b]$.
\item The function $t \mapsto f(x, t)$ is SHK integrable over $[a,b]$ for every $x \in \Rn$.
\item There exists $S \subset \Rn$ compact and $\delta \colon [a,b] \rightarrow \R^+$ such that all $\delta$-fine partitions $\{\alpha = \alpha_0$, $\tau_1$, $\alpha_1, \ldots , \tau_k$, $\alpha_k = \beta \}$ of $[\alpha,\beta] \subset [a,b]$ and all functions $w \colon [a,b] \rightarrow \Rn$ satisfy
\begin{equation*} 
\sum_{i=1}^k f(w(\tau_i), \tau_i) (\alpha_i - \alpha_{i-1}) \in S. 
\end{equation*}
\end{enumerate}
Then for every $v \in \Rn$ and $\tau \in [a,b]$ there exists $y \colon [a,b] \rightarrow \Rn$ such that 
\begin{equation*}
y(t) = v + \mathrm{(SHK)} \int_{\tau}^t f(y(s), s) \ddd s \quad \text{for} \quad t \in [a,b].
\end{equation*} 
\end{veta}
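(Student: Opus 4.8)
The plan is to rewrite the integral equation as a normalized generalized ordinary differential equation and apply Theorem~\ref{mainexist} in its Euclidean form (Theorem~\ref{mainexist-lin}), after an approximation that supplies the Lipschitz-type control in the state variable that $\mathbf{(H1)}$--$\mathbf{(H3)}$ do not provide. Put, for $x \in \Rn$ and $t \in [a,b]$,
\begin{equation*}
G(x,t) = \mathrm{(SHK)}\int_{\tau}^{t} f(x,s)\ddd s, \qquad F(x,\sigma,t) = x + G(x,t) - G(x,\sigma);
\end{equation*}
this is legitimate by $\mathbf{(H2)}$ and satisfies $F(x,\sigma,\sigma) = x$. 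Because of the cancellation in \eqref{sumcalcel} and the definition of the SHK integral, a function $y\colon[a,b]\to\Rn$ solves $\dot x = \D_t F(x,\sigma,t)$ with $y(\tau)=v$ if and only if $y(t) = v + \mathrm{(SHK)}\int_{\tau}^{t} f(y(s),s)\ddd s$ on $[a,b]$; for interior $\tau$ one builds the solution on $[\tau,b]$ and, by reversing time, on $[a,\tau]$, and glues, so it suffices to treat forward existence from a left endpoint.

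The first step is to extract from $\mathbf{(H3)}$ a uniform control of the primitives. Taking $w\equiv x$ in $\mathbf{(H3)}$ and refining gives $G(x,t)-G(x,s)\in S$ for all $x$ and $s\le t$, hence $\|G(x,t)-G(x,s)\|\le\operatorname{diam}S$; the one-interval partition gives $\|f(x,t)\|\le\operatorname{diam}S/\delta(t)$ for all $x$. The key claim is that there are an increasing $h\colon[a,b]\to\R$ and a modulus function $\zeta$ with $\|G(x,t)-G(x,s)\|\le\zeta(|h(t)-h(s)|)$ for all $x$: were this false, one could find $x_k$ and shrinking intervals on which $G(x_k,\cdot)$ oscillates by a fixed $\varepsilon_0>0$, and packing finitely many disjoint such intervals into one short interval with $w$ equal to the corresponding $x_k$ on each, the single gauge $\delta$ of $\mathbf{(H3)}$ still governs the resulting Riemann sum, which, after selecting a common direction in $\Rn$, would have norm larger than $\operatorname{diam}S$, contradicting $\mathbf{(H3)}$. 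This reduction is the main obstacle. Granting it, $F$ satisfies $\mathbf{(E1)}$ (continuity of $x\mapsto G(x,t)$ follows from $\mathbf{(H1)}$ and a controlled convergence theorem for the SHK integral, the control coming from the above) and $\mathbf{(E2)}$ with this $h,\zeta$; only $\mathbf{(E4)}$ may fail, precisely because $\mathbf{(H1)}$ carries no modulus estimate in $x$.

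To repair $\mathbf{(E4)}$ I would approximate $f$ by functions $f_k$ that are bounded and Lipschitz in $x$, obtained by combining a cutoff of the pointwise bound $\operatorname{diam}S/\delta(t)$ with mollification in the state variable; the delicate point here is to arrange this so that $\mathbf{(H3)}$, or at least the consequences of it used above, is inherited by $f_k$, with the same $h,\zeta$. For such $f_k$ the normalized GODE $\dot x = \D_t F_k(x,\sigma,t)$, $F_k(x,\sigma,t)=x+G_k(x,t)-G_k(x,\sigma)$ with $G_k(x,t)=\mathrm{(SHK)}\int_{\tau}^{t}f_k(x,s)\ddd s$, satisfies \eqref{calf1weak} with that $h,\zeta$ and \eqref{calf2weak} with $\omega_k(\nu)=\nu$ and a finite increasing $\xi_k$ (the Lipschitz constant of $f_k$ in $x$ is bounded), while the initial-limit hypothesis of Theorem~\ref{mainexist-lin} is automatic since $\Omega=\Rn$ and $G_k(\tilde x,\cdot)$ is continuous. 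Theorem~\ref{mainexist-lin} then yields a local solution, and the a priori bound $\|y_k(t)-y_k(s)\|\le\operatorname{diam}S$ rules out blow-up, so a continuation argument extends it to all of $[a,b]$: for every $k$ there is $y_k\colon[a,b]\to\Rn$ with $y_k(t)=v+\mathrm{(SHK)}\int_{\tau}^{t}f_k(y_k(s),s)\ddd s$. Since $h,\zeta$ and $\omega_k(\nu)=\nu$ are uniform in $k$, the equiregularity estimates from the proof of Theorem~\ref{mainexist} apply uniformly, so $\{y_k\}$ is equiregulated, and all $y_k$ take values in the compact set $v+\overline{\operatorname{conv}}\,S$.

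By Theorem~\ref{franthm} a subsequence of $\{y_k\}$ converges uniformly to some $y\colon[a,b]\to\Rn$. It remains to pass to the limit in $y_k(t)=v+\mathrm{(SHK)}\int_{\tau}^{t}f_k(y_k(s),s)\ddd s$; since the SHK integral is only conditionally convergent this is not dominated convergence but a controlled convergence argument, the uniform control again furnished by $\mathbf{(H3)}$, together with $\mathbf{(H1)}$ and $y_k\to y$ to get $f_k(y_k(s),s)\to f(y(s),s)$ almost everywhere. This yields $y(t)=v+\mathrm{(SHK)}\int_{\tau}^{t}f(y(s),s)\ddd s$, which is the assertion. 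The genuinely nontrivial ingredients are the extraction of the uniform modulus $h,\zeta$ from $\mathbf{(H3)}$, the construction of the approximants $f_k$ preserving it, and the controlled passage to the limit inside the SHK integral; the rest is bookkeeping.
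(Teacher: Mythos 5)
Your overall strategy (rewrite as a normalized GODE and feed it into Theorem \ref{mainexist-lin}) is the paper's, but the paper gets the hypotheses of Theorem \ref{mainexist-lin} essentially for free from the decomposition theorem (Theorem \ref{decompose}): $\mathbf{(H1)}$--$\mathbf{(H3)}$ hold if and only if $f(x,t)=g(t)+h(x,t)$ with $g$ SHK integrable and $h$ Carath\'eodory. Then $G(x,t)=\mathrm{(L)}\int_{t_0}^t h(x,s)\ddd s$ lies in $\mathcal{F}(\O,h,\omega)$ on the compact $x$-region supplied by $\mathbf{(H3)}$ (Example \ref{carath}), the $g$-part cancels identically in \eqref{calf2weak} and only weakens \eqref{calf1} to \eqref{calf1weak} because its primitive is continuous; no approximation of $f$ is needed at all. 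You try instead to extract everything directly from $\mathbf{(H3)}$, and both of your substitute steps have genuine gaps. First, your ``key claim'' that $\mathbf{(H3)}$ yields a uniform-in-$x$ bound $|G(x,t)-G(x,s)|\le\zeta(|h(t)-h(s)|)$ is exactly the hard content that the paper delegates to Theorem \ref{decompose}, and your contradiction/packing sketch does not establish it: negating ``there exist $h,\zeta$'' does not produce a fixed $\varepsilon_0$ and shrinking intervals in the form you use; relating oscillations of $G(x_k,\cdot)$ to a single $\mathbf{(H3)}$ Riemann sum with one function $w$ requires a Saks--Henstock argument and careful tag bookkeeping; and ``selecting a common direction in $\Rn$'' so that the contributions add in norm is unjustified.

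Second, and more fatally, the mollification step cannot deliver $\mathbf{(E4)}$/\eqref{calf2weak}. Without the decomposition, the only uniform-in-$x$ control on $t$-increments is the modulus above, and mollifying in $x$ gives at best $\|G_k(x,t)-G_k(y,t)-G_k(x,s)+G_k(y,s)\|\le C_k\,\|x-y\|\,\zeta(|h(t)-h(s)|)$, which is \emph{not} of the required form $\omega(\|x-y\|)\,|\xi_k(t)-\xi_k(s)|$ with $\xi_k$ increasing: for $\zeta(u)=\sqrt{u}$ and $h$ continuous, splitting $[a,b]$ into $n$ subintervals of equal $h$-increment makes the left-hand sums grow like $\sqrt{n}$ while increments of any increasing $\xi_k$ telescope, so no such $\xi_k$ exists. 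Your pointwise bound $\|f(x,t)\|\le\operatorname{diam}S/\delta(t)$ does not repair this, since the gauge $\delta$ is an arbitrary positive function (not measurable, let alone with integrable reciprocal), so the Lipschitz constants of your truncated/mollified $f_k(\cdot,t)$ admit no integrable majorant; it is the Carath\'eodory majorant $m$ from Theorem \ref{decompose} that supplies the increasing $\xi$. In addition, SHK integrability of $f_k(x,\cdot)$ (an $x$-average of merely conditionally integrable functions), the inheritance of $\mathbf{(H3)}$ by $f_k$, the continuity in $\mathbf{(E1)}$ via a ``controlled convergence theorem'', and the final passage to the limit inside the SHK integral are all asserted rather than proved. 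With Theorem \ref{decompose} in hand, every one of these difficulties disappears and the whole approximation--compactness scheme becomes unnecessary; without it, your argument as written does not close.
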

We prove that $\mathbf{(H1)}$-$\mathbf{(H3)}$ for $f \colon \Rn \times [a,b] \rightarrow \Rn$ imply $\mathbf{(E1)}$-$\mathbf{(E4)}$ for 
\begin{equation*}
F(x, \tau, t) = x + \mathrm{(SHK)} \int^t_{\tau} f(x, s) \ddd s.
\end{equation*}
Since condition $\mathbf{(H3)}$ ensures that the solution stays in a compact set around the initial condition $(v, \tau)$, we can limit ourselves to studying the function $f$ on $\B_R :=\{ x \in \Rn \cara \| x \| \leq R \} $ for $R > 0$ sufficiently large. We make use of the following decomposition theorem (\cite{schwabik92}, page 78).

\begin{veta} \label{decompose}
A function $f \colon \Rn \times [a,b] \rightarrow \Rn$ satisfies $\mathbf{(H1)}$-$\mathbf{(H3)}$ if and only if $f(x, t) = g(t) + h(x,t)$, where $g \colon [a,b] \rightarrow \Rn$ is SHK integrable over $[a,b]$ and $h \colon \Rn \times [a,b] \rightarrow \Rn$ satisfies the Carath\'{e}odory conditions $\mathbf{(C1)}$-$\mathbf{(C3)}$.
\end{veta}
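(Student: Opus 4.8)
The plan is to treat the equivalence one direction at a time; the ``if'' part is routine and the ``only if'' part carries all the weight, so I would record the former briefly and spend the effort on the latter.

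For the ``if'' direction, write $f=g+h$ with $g$ SHK integrable and $h$ satisfying $\mathbf{(C1)}$--$\mathbf{(C3)}$. Then $\mathbf{(H1)}$ is just $\mathbf{(C1)}$, since adding the term $g(t)$, constant in $x$, changes nothing. $\mathbf{(H2)}$ holds because $h(x,\cdot)$ is measurable by $\mathbf{(C2)}$ and dominated by the Lebesgue integrable $m$ of $\mathbf{(C3)}$, hence Lebesgue and therefore SHK integrable, and the SHK integral is additive. For $\mathbf{(H3)}$ I would split each sum $\sum f(w(\tau_i),\tau_i)(\alpha_i-\alpha_{i-1})$ into its $g$-part and its $h$-part. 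Fixing a gauge that realises $\varepsilon=1$ for $g$ on $[a,b]$, the Saks--Henstock lemma keeps the $g$-part within $1$ of an increment of the bounded continuous indefinite SHK integral of $g$, hence in a fixed ball. Putting $M(t)=\sup_{x\in\Rn}\|h(x,t)\|$ --- measurable as a countable supremum (by $\mathbf{(C1)}$ and separability of $\Rn$) of the functions $\|h(q,\cdot)\|$, $q\in\mathbb{Q}^n$, and $\le m$ a.e., hence Lebesgue and SHK integrable --- a gauge realising $\varepsilon=1$ for $M$ bounds the $h$-part in norm by $\sum M(\tau_i)(\alpha_i-\alpha_{i-1})\le 1+\int_a^b M$. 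Taking the pointwise minimum of the two gauges and for $S$ the Minkowski sum of the two balls, which is compact in $\Rn$, finishes it.

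For the ``only if'' direction I would fix $x_0\in\Rn$ and set $g(t)=f(x_0,t)$, $h(x,t)=f(x,t)-f(x_0,t)$. Then $g$ is SHK integrable by $\mathbf{(H2)}$, $\mathbf{(C1)}$ for $h$ is exactly $\mathbf{(H1)}$, and $\mathbf{(C2)}$ follows because SHK integrable scalar functions are measurable, so each $f(x,\cdot)$ and hence each $h(x,\cdot)$ is measurable. The whole difficulty is $\mathbf{(C3)}$: producing one Lebesgue integrable $m$ with $\|h(x,t)\|\le m(t)$ for all $x$ and a.e.\ $t$. Here the naive pointwise bound $\|f(x,\tau)\|\le C/\delta(\tau)$ extracted from one-interval $\delta$-fine partitions is useless, since the gauge from $\mathbf{(H3)}$ need not be bounded below on a set of positive measure; this is the point where the proof must genuinely use the \emph{uniform} compact containment and the freedom in the choice of the evaluation function $w$.

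The plan for $\mathbf{(C3)}$ is a gauge/contradiction argument carried out coordinate by coordinate. For the $j$-th coordinate put $M^{\pm}_j(t)=\sup_{x\in\Rn}\bigl(\pm(f_j(x,t)-g_j(t))\bigr)^{+}$, measurable as before, and the claim to be proved is $\int_a^b M^{+}_j<\infty$ (the case of $M^{-}_j$ being symmetric). If it fails, take the gauge $\delta$ from $\mathbf{(H3)}$, shrunk so that in addition the $\delta$-fine Riemann sums of $g_j$ stay within $1$ of $\int_a^b g_j$; since $M^{+}_j\ge 0$ is measurable with infinite integral, for this gauge there is a $\delta$-fine partition $\{[\alpha_{i-1},\alpha_i],\tau_i\}$ of $[a,b]$ with $\sum M^{+}_j(\tau_i)(\alpha_i-\alpha_{i-1})$ as large as desired (otherwise $M^{+}_j$ would be Henstock--Kurzweil, hence Lebesgue, integrable). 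At each tag I would pick $w(\tau_i)$ with $f_j(w(\tau_i),\tau_i)-g_j(\tau_i)\ge\tfrac12 M^{+}_j(\tau_i)$ --- such a point exists when $M^{+}_j(\tau_i)>0$, and $w(\tau_i)=x_0$ serves when $M^{+}_j(\tau_i)=0$ --- which is allowed because $\mathbf{(H3)}$ quantifies over all $w\colon[a,b]\to\Rn$ with no measurability demand. Then the $j$-th coordinate of $\sum f(w(\tau_i),\tau_i)(\alpha_i-\alpha_{i-1})$ is at least $\tfrac12\sum M^{+}_j(\tau_i)(\alpha_i-\alpha_{i-1})+\int_a^b g_j-1$, which is unbounded and therefore cannot lie in the compact set $S$, a contradiction. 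Hence all $M^{\pm}_j$ are Lebesgue integrable and $m(t)=\sum_{j=1}^n\bigl(M^{+}_j(t)+M^{-}_j(t)\bigr)$ is a Lebesgue integrable majorant of $\|h(x,\cdot)\|$ valid for every $x$, giving $\mathbf{(C3)}$. I expect this last step --- converting ``all Riemann sums confined to a compact set'' into a uniform $L^1$ domination, with the adversarial choice of $w$ at the tags as the key device --- to be the main obstacle; the rest is bookkeeping with the Saks--Henstock lemma and elementary measurability.
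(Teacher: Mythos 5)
The paper does not actually prove this statement: it is imported verbatim from \v{S}.~Schwabik's monograph (the citation to page 78) and used as a black box, so there is no in-text argument to measure yours against. That said, your blind proof is essentially correct and self-contained, and it follows the natural route (the same decomposition $g(t)=f(x_0,t)$, $h=f-g$): the ``if'' direction by splitting the Riemann sums and controlling the two pieces via Saks--Henstock and the dominating function, the ``only if'' direction by reducing everything to Lebesgue integrability of the coordinatewise envelopes $M_j^{\pm}$, which you extract from $\mathbf{(H3)}$ by choosing $w$ adversarially at the tags --- this is indeed the crux, and your use of the freedom in $w$ is exactly what makes it work. The one spot you should tighten is the parenthetical claim that boundedness of all $\delta$-fine Riemann sums of the nonnegative measurable function $M_j^{+}$ would make it Henstock--Kurzweil integrable: boundedness of Riemann sums does not by itself give convergence. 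The easy patch is a truncation argument: if some gauge $\delta$ bounds every $\delta$-fine sum by $C$, then for each $n$ intersect $\delta$ with a gauge realising $\varepsilon=1$ for the bounded measurable function $\min(M_j^{+},n)$; any partition fine for both yields $\int_a^b \min(M_j^{+},n)\le C+1$, and monotone convergence gives $\int_a^b M_j^{+}\le C+1<\infty$. With that one line inserted, your contradiction argument for $\mathbf{(C3)}$ --- and hence the whole equivalence --- goes through.
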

We already mentioned that $\mathbf{(C1)}$-$\mathbf{(C3)}$ for $h$ imply \eqref{calf1} and \eqref{calf2} for
\begin{equation*}
G(x, t) = \mathrm{(L)} \int_{t_0}^t h(x, s) \ddd s.
\end{equation*}
We notice that
\begin{equation*}
F(x, \tau, t) = x + G(x, t) - G(x, \tau) + \mathrm{(SHK)} \int_{\tau}^t g(s) \ddd s.
\end{equation*}
Hence
\begin{align*}
 F(x, \tau, t_2) - F(x, \tau, t_1) &= G(x, t_2) - G(x, t_1) + \mathrm{(SHK)} \int_{t_1}^{t_2} g(s) \ddd s, \\
 F(y, \sigma, t_1) - F(y, \sigma, t_2) &=  G(y, t_1) - G(y, t_2) + \mathrm{(SHK)} \int_{t_2}^{t_1} g(s) \ddd s.
\end{align*}
By adding these equalities we infer that \eqref{calf2weak} is indifferent to $g$. Since $G$ satisfies \eqref{calf1} and the SHK integral of $g$ is continuous, we have that $F$ satisfies \eqref{calf1weak}. Consequently, Theorem \ref{henstockexist} is contained in Theorem \ref{mainexist}. To see that it is not contained in Theorem \ref{schwabexist}, we can consider Example \ref{bvpriklad}. \par
While Theorem \ref{henstockexist} is able to handle functions of unbounded variation, it remains within the confines of the standard ordinary differential equation theory. It allows for the solution to be an indefinite SHK integral of the right hand side, but not in the sense of coupled variables. However, the above method for dealing with an error function which does not depend on the space variable gives clear indication on how to modify any GODE example from $\mathcal{F} (\O, h, \omega)$ so that it no longer satisfies \eqref{calf1}, while \eqref{calf1weak} still holds. Thus, we can see that, even in the context of Euclidean spaces, Theorem \ref{mainexist} contains both Theorem \ref{schwabexist} and Theorem \ref{henstockexist}, but it is not covered by them. Moreover, it invites the question of whether we could deal with regulated solutions in spaces that are not locally compact, which will be pursued in future research. \\[10pt]
\textbf{Acknowledgement.} I would like to thank Jan Malý for inspiration and guidance during the creation of this text.


\end{document}